\definecolor{darkred}{rgb}{0.5,0,0}
\definecolor{darkgreen}{rgb}{0,0.5,0}
\definecolor{darkblue}{rgb}{0,0,0.5}
\crefname{page}{page}{pages}
\crefname{property}{property}{properties}
\crefname{propertyM}{property}{properties}
\crefname{statement}{statement}{statements}
\numberwithin{equation}{section}    
\newcommand{\mynewtheorem}[4][]{
  \ifthenelse{\isempty{#1}}{    
    \newtheorem{#2}{#3}         
  }{
    \newaliascnt{#2}{#1}        
    \newtheorem{#2}[#2]{#3}     
    \aliascntresetthe{#2}       
  }
  \crefname{#2}{#3}{#4}         
}
\theoremstyle{plain}
\newtheorem{Theorem}{Theorem}[section]
\crefname{Theorem}{Theorem}{Theorems}
\theoremstyle{definition}
\renewcommand{\epsilon}{\varepsilon}
\newcommand{\ve}{\varepsilon}
\renewcommand{\phi}{\varphi}
\DeclareMathOperator{\Tr}{Tr}
\DeclareMathOperator{\supp}{supp}
\providecommand{\abs}[1]{\lvert#1\rvert}
\providecommand{\bdry}{\partial}
\providecommand{\bigabs}[1]{\bigl\lvert#1\bigr\rvert}
\providecommand{\biggabs}[1]{\biggl\lvert#1\biggr\rvert}
\providecommand{\argmt}{\mathchoice{{}\cdot{}}{{}\cdot{}}{{}\bullet{}}{{}\bullet{}}}
\providecommand{\setsize}[1]{\lvert#1\rvert}
\providecommand{\bigsetsize}[1]{\bigl|#1\bigr|}
\providecommand{\Bigsetsize}[1]{\Bigl|#1\Bigr|}
\providecommand{\norm}[2][]{\lVert#2\rVert\ifthenelse{\isempty{#1}}{}{_{#1}}}
\providecommand{\bignorm}[2][]{\bigl\lVert#2\bigr\rVert\ifthenelse{\isempty{#1}}{}{_{#1}}}
\providecommand{\Bignorm}[2][]{\Bigl\lVert#2\Bigr\rVert\ifthenelse{\isempty{#1}}{}{_{#1}}}
\providecommand{\biggnorm}[2][]{\biggl\lVert#2\biggr\rVert\ifthenelse{\isempty{#1}}{}{_{#1}}}
\providecommand{\Biggnorm}[2][]{\Biggl\lVert#2\Biggr\rVert\ifthenelse{\isempty{#1}}{}{_{#1}}}
\providecommand{\floor}[1]{\lfloor#1\rfloor}
\providecommand{\Norm}[2][]{\left\lVert#2\right\rVert\ifthenelse{\isempty{#1}}{}{_{#1}}}
\providecommand{\spr}[3][]{{\langle#2,#3\rangle}\ifthenelse{\isempty{#1}}{}{_{#1}}}
\providecommand{\Spr}[3][]{\left\langle#2,#3\right\rangle\ifthenelse{\isempty{#1}}{}{_{#1}}}
\providecommand{\from}{\colon}
\providecommand{\dirac}[1]{\delta_{#1}}
\providecommand{\xto}{\xrightarrow}
\providecommand{\qtext}{\quad\text}
\providecommand{\qqtext}{\qquad\text}
\providecommand{\textq}[1]{\text{#1}\quad}
\providecommand{\qtextq}[1]{\quad\text{#1}\quad}
\providecommand{\Ioo}[3][]{\mathopen(#2,#3\mathclose)\ifthenelse{\isempty{#1}}{}{_{#1}}}
\providecommand{\Ico}[3][]{\mathopen[#2,#3\mathclose)\ifthenelse{\isempty{#1}}{}{_{#1}}}
\providecommand{\Ioc}[3][]{\mathopen(#2,#3\mathclose]\ifthenelse{\isempty{#1}}{}{_{#1}}}
\providecommand{\Icc}[3][]{\mathopen[#2,#3\mathclose]\ifthenelse{\isempty{#1}}{}{_{#1}}}
\providecommand{\ifu}[1]{\chi\ifthenelse{\isempty{#1}}{}{_{#1}}}
\providecommand{\isect}{\cap}
\providecommand{\union}{\cup}
\providecommand{\Union}{\bigcup}
\providecommand{\dunion}{\mathbin{\dot\cup}}
\providecommand{\dUnion}{\mathop{\dot\bigcup}}
\providecommand{\tensor}{\otimes}
\providecommand{\Tensor}{\bigotimes}
\providecommand{\Laplace}{\Delta}
\providecommand{\graph}{\Upsilon}
\providecommand{\cone}{\mathcal C}
\providecommand{\K}{K}
\let\originald\d 
\renewcommand{\d}{\ifthenelse{\boolean{mmode}}{\mathrm d}{\originald}}
\def\Int#1d{\int#1\,\d} 
\providecommand{\dx}{\d x}
\newcommand{\di}{k}
\newcommand{\RL}{\R^\di}
\newcommand{\B}{\mathbb{B}}
\newcommand{\E}{\mathbb{E}}
\newcommand{\N}{\mathbb{N}}
\newcommand{\PP}{\mathbb{P}}
\let\originalP\P 
\renewcommand{\P}{\ifthenelse{\boolean{mmode}}{\mathbb P}{\originalP}}
\newcommand{\R}{\mathbb{R}}
\newcommand{\RR}{\mathbb{R}}
\newcommand{\Z}{\mathbb{Z}}
\newcommand{\ZZ}{\mathbb{Z}}
\newcommand{\cA}{\mathcal{A}}
\newcommand{\cB}{\mathcal{B}}
\providecommand{\Borel}{\cB}
\newcommand{\cE}{\mathcal{E}}
\newcommand{\cF}{\mathcal{F}}
\newcommand{\cM}{\mathcal{M}}
\newcommand{\cN}{\mathcal{N}}
\newcommand{\cU}{\mathcal{U}}
\newcommand{\cZ}{\mathcal{Z}}
\newcommand{\fd}{\mathfrak{d}}
\providecommand{\density}{\rho}
\providecommand{\etdef}{\phantom:&\rlap:=}
\providecommand{\connectedto}[1][\omega]{\overset{#1}\leftrightsquigarrow}%
\providecommand{\cluster}{C}%
\providecommand{\M}{M}
\providecommand{\muc}{\mu_{\mathrm c}}%
\newcommand{\hm}[1]{\textbf{*}\leavevmode{\marginpar{\tiny%
$\hbox to 0mm{\hspace*{-0.5mm}$\leftarrow$\hss}%
\vcenter{\vrule depth 0.1mm height 0.1mm width \the\marginparwidth}%
\hbox to 0mm{\hss$\rightarrow$\hspace*{-0.5mm}}$\\\relax\raggedright #1}}}
\title[A Glivenko--Cantelli Theorem for almost additive functions]
{A Glivenko--Cantelli Theorem for almost additive functions on lattices}
\author{Christoph Schumacher, Fabian Schwarzenberger, Ivan Veseli\'c}
\address[C.~Schumacher, F.~Schwarzenberger, I.~Veseli\'c]
        {Fakult\"at f\"ur Mathematik,
        Technische Universit\"at Chemnitz,
        09107 Chemnitz, Germany}
\email{ivan.veselic@mathematik.tu-chemnitz.de}
\curraddr[F.~Schwarzenberger]
        {Fakult\"at Informatik/Mathematik, Hochschule f\"ur Technik und Wirtschaft Dresden, 01069 Dresden, Germany}
\date{2015-04-24}
\thanks{MSC: 60F99, 60B12, 62E20, 60K35.\\
Keywords: Glivenko--Cantelli Theory, uniform convergence, empirical measures, large deviations, statistical mechanics.}
\begin{document}

\maketitle

\begin{abstract}
  We develop a Glivenko--Cantelli theory for monotone, almost additive
  functions of i.\,i.\,d.\ sequences of random variables indexed by~$\Z^d$.
  Under certain conditions on the random sequence,
  short range correlations are allowed as well.
	We have an explicit error estimate,
	consisting of a probabilistic and a geometric part.
  We apply the results to yield uniform convergence
	for several quantities arising naturally in statistical physics.
\end{abstract}

\section{Introduction}

The classical Glivenko--Cantelli theorem states
that the empirical cumulative distribution functions of an increasing set
of independent and identically distributed random variables converge
\emph{uniformly} to the cumulative population distribution function almost surely.
Due to its importance to applications, e.\,g.\ statistical learning theory, the Glivenko--Cantelli theorem
is also called the ``fundamental theorem of statistics''.
The theorem has initiated the study of so-called Glivenko--Cantelli classes as they feature, for instance, in the Vapnik--Chervonenkis theory \cite{VapnikChervonenkis1971}.
Generalizations of the fundamental theorem rewrite the uniform convergence with respect to the real variable as
convergence of a \emph{supremum over a family (of sets or functions)} and widen the family over which the supremum is taken,
making the statement ``more uniform''.
However, there are limits to this uniformization:
For instance, if the original distribution is continuous,
there is no convergence if the supremum is taken
w.\,r.\,t.\ the family of finite subsets of the reals.
Thus, a balance has to be found between
the class over which the supremum is taken
and the distribution of the random variables,
the details of which are often dictated by the application in mind.
Another important extension are multivariate Glivenko--Cantelli theorems,
where the i.\,i.\,d.\ random variables are generalized to
i.\,i.\,d.\ random vectors with possibly dependent coordinates.
Such results have been obtained e.\,g.\ in
\cite{Rao1962,Stute1976,DeHardt1971,Wright1981}.
In contrast to the classical one-dimensional Glivenko--Cantelli theorem,
where no assumptions on the underlying distribution is necessary, in the higher dimensional case,
one has to exclude certain singular continuous measures, cf.~Theorem \ref{wright:LDP}.
The multidimensional version of the Portmanteau theorem provides a hint why such conditions are necessary.
We apply these results in \cref{GC}.

To avoid confusion, let us stress that uniform convergence in the classical Glivenko--Cantelli Theorem and in
our result involves discontinuous functions, so it is quite different to uniform convergence of differentiable functions,
as it is encountered e.g.~with power series.

In many models of statistical physics one shows that certain random
quantities are self-averaging, i.e.\ possess a well defined non-random thermodynamic limit.
This is not only true for random operators of Schr\"odinger type, cf.~e.g.~\cite{Spencer-86,PasturF-92,Veselic-08},
but also for spin  systems, cf.~e.g.~\cite{Griffiths-64,GriffithsL-68,Vuillermot-77,WehrA-90,Bovier-06}.
Note however that the latter  papers, studying the free energy (and derived quantities),
heavily use specific properties of the exponential function (entering the free energy) like
convexity and smoothness. We lack these properties in the Glivenko--Cantelli setting and are
thus dealing with a completely different situation.
The geometric ingredients of the proof of the thermodynamic limit can be traced back to papers by Van Hove \cite{VanHove-49}
and F\o lner \cite{Foelner-55}. This is why the exhaustion sets used in the thermodynamic limit are associated with their names.

While standard statistical problems concern i.\,i.\,d.\ samples,
an independence assumption quickly appears unnatural in statistical physics.
Neighboring entities in solid state models (such as atoms or spins)
are unlikely to not influence each other.
In order to treat physically relevant scenarios
one introduces a geometry to encode location and adjacency relations
between the random variables, which in turn are used to allow dependencies
between close random variables.
In the present paper we choose~$\Z^d$ as our model of physical space, 
although our methods should apply to amenable groups as well,
at least with an additional monotile condition.
The focus on $\Z^d$ allows us to avoid technicalities of amenable groups with monotiles
and can thus present our results in a simpler, more transparent manner.
Furthermore, we can achieve more explicit error bounds due to the simple 
geometry of~$\Z^d$.

Our main result is \cref{thm:main},
which is a Glivenko--Cantelli type theorem for a class of monotone,
almost additive functions and suitable distributions of the random variables,
allowing spatial dependencies.
Our precise hypotheses are spelled out in \cref{defP,def:admissible}.
The theorem can be interpreted as a multi-dimensional ergodic theorem
with values in the Banach space of right continuous and bounded functions
with $\sup$-norm, i.\,e.\ a uniform convergence result.
Under slightly strengthened assumptions we obtain an explicit error term
for the convergence,
which is a sum of a geometric and a probabilistic part, cf.~\cref{cor:main}.
While earlier Banach space valued ergodic theorems, e.\,g.\ \cite{LenzMV-08,LenzSV-10},
have been restricted to a finite set of colors,
we are able to treat the real-valued case.
To do this, we have to assume a monotonicity property,
which is satisfied in most cases of interest.
We obtain a more explicit convergence estimate than \cite{LenzMV-08}, as well.
This is due to the fact that we assume a short range correlation condition,
while \cite{LenzMV-08} assumes the existence of limiting frequencies.
The Glivenko--Cantelli result is applied to several examples
from statistical physics in \cref{sec:Ecf,sec:ccf}.
The flexibility and generality of our probabilistic model is displayed in
\cref{example:density}.

For the proof we use two sets of ideas.
The first one concerns geometric approximation and tiling arguments
for almost additive functions based on the amenability of the group $\Z^d$ going back to the mentioned seminal papers
of Van Hove \cite{VanHove-49} and F\o lner \cite{Foelner-55}.
In the context of Banach space valued ergodic theorems 
they have been used for instance in
\cite{Lenz-02,LenzS-05,LenzVeselic2009,LenzMV-08,LenzSV-10,PogorzelskiS-13}.
%
The second ingredient of the proof is multivariate Glivenko--Cantelli theory,
as developed in \cite{Rao1962,Stute1976,DeHardt1971,Wright1981}. Our \cref{strictlymonotone}
shows that in our setting a large deviations type estimate derived by Wright can be applied.
The latter is a modification of the Dvoretzky--Kiefer--Wolfowitz inequality \cite{DvoretzkyKieferWolfowitz1956,Massart1990}.

The structure of the paper is as follows:
In \cref{sec:notation} we present our notation and the two main theorems.
\cref{sec:wunsch} contains an intuitive sketch of the proof in the case $\Z^d=\Z$,
\cref{secboundary} geometric tiling and approximation arguments,
\cref{GC} multivariate Glivenko--Cantelli theory,
\cref{sec:Aadd} the proof of the main theorem, and
\cref{sec:Ecf,sec:ccf} examples.

\section{Notation and main results}\label{sec:notation}

The geometric setting of this paper is given via~$\Z^d$,
which gives in a natural way rise to a graph $(\Z^d,\cE)$.
Here, the set of edges~$\cE$ is the subset of the power set of~$\Z^d$,
consisting exactly of those $\{x,y\}\subseteq\Z^d$
which satisfy $\norm[1]{y-x}=1$.
As usual $\norm[1]{x}=\sum_{i=1}^d \abs{x_i}$ denotes the $\ell^1$-norm in $\Z^d$.
By~$\cF$ we denote the (countable) set which consists of all finite subsets of~$\Z^d$.
For $\Lambda\in\cF$, we write~$\setsize\Lambda$
for the number of elements in~$\Lambda$.
The metric on the set of vertices $\fd\from \Z^d\times \Z^d\to\N_0$ is defined via the $\ell^1$-norm,
i.\,e.\ for $x,y\in\Z^d$ we set $\fd(x,y):=\norm[1]{y-x}$.
For two sets $\Lambda_1,\Lambda_2\subseteq \Z^d$
we write $\fd(\Lambda_1,\Lambda_2):=\min\{\fd(x,y)\mid x\in \Lambda_1,y\in\Lambda_2\}$.
In the case that $\Lambda_1=\{x\}$ contains only one element
we write $\fd(x,\Lambda_2)$ for $\fd(\{x\},\Lambda_2)$.

For $\Lambda\subseteq \Z^d$ we write $\Lambda+z :=\{x+z\mid x\in \Lambda\}$.
A \emph{cube of side length $n\in\N$} is a set which is given by $
(\Ico0n^d\isect\Z^d)+z$ for some $z\in\Z^d$.

Using the metric~$\fd$, we define for $r\in\N_0$ the $r$-boundary
of a set $\Lambda\subseteq\Z^d$ by
\begin{equation*}
 \partial^r(\Lambda)
:=\{x\in \Lambda\mid \fd(x,\Z^d\setminus\Lambda)\le r\}
\union \{x\in \Z^d\setminus \Lambda \mid \fd(x,\Lambda)\le r\}\text.
\end{equation*}
Moreover, we set
\begin{equation}\label{eq:def:Lambda^r}
 \Lambda^r:=\Lambda\setminus \partial^r(\Lambda)
=\{x\in \Lambda\mid \fd(x,\Z^d\setminus \Lambda)> r\}\text.
\end{equation}
If $(\Lambda_n)_{n\in \N}$ (or short $(\Lambda_n)$)
is a sequence of subsets of~$\Z^d$,
we write $(\Lambda_n^r)_{n\in\N} $ or $(\Lambda_n^r)$
instead of $((\Lambda_n)^r)_{n\in\N}$.

Note that for a cube~$\Lambda_n$ of side length~$n$ and $r\le n/2$ we have
\begin{equation*}
  \setsize{\Lambda_n}=n^d\textq,
  \setsize{\Lambda_n^r}=(n-2r)^d\qtextq{and}
  \setsize{\partial^r(\Lambda_n)}=(n+2r)^d-(n-2r)^d\text.
\end{equation*}

In the following we introduce colorings of the elements of~$\Z^d$.
To this end, let $\cA\subseteq\R$ be the set of possible colors.
The sample set, which describes the set of all possible colorings of~$\Z^d$
is given by
\begin{equation*}
  \Omega:=\cA^{\Z^d}
        :=\{\omega=(\omega_z)_{z\in \Z^d}\mid\omega_z\in\cA\}
        \subseteq \R^{\Z^d}\text.
\end{equation*}
For each $z\in\Z^d$ we define the translation
\begin{equation}\label{eq:def:tau}
  \tau_z\from\Omega\to\Omega\textq,
  (\tau_z\omega)_x=\omega_{x+z}\textq,(z\in \Z^d)\text,
\end{equation}
i.\,e.~$\Z^d$ acts on~$\Omega$ via translations.
For $\Lambda\in\cF$ we set
$\Omega_\Lambda:=\cA^\Lambda
  :=\{\omega=(\omega_z)_{z\in\Lambda}\mid\omega_z\in\cA\}$
and define $\Pi_\Lambda\from\Omega\to \Omega_\Lambda$ by
\begin{equation*}
 \Pi_\Lambda(\omega)
   :=\omega_\Lambda
   :={(\omega_z)}_{z\in\Lambda}
   \qtextq{for}
   \omega=(\omega_z)_{z\in\Z^d}\in\Omega\text.
\end{equation*}
We simplify $\Pi_z:=\Pi_{\lbrace z\rbrace}$ for $z\in\Z^d$.
As usual, $\cA$ is equipped with the Borel $\sigma$-algebra~$\Borel(\cA)$
inherited from~$\R$.
Let~$\Borel(\Omega)$ be the product $\sigma$-algebra on~$\Omega$.
  Let~$\P$ be a probability measure on $(\Omega,\Borel(\Omega))$ satisfying:
\begin{Assumption}\label{defP}~
  \begin{enumerate}[(M1)]
   \item\label[propertyM]{M1} \emph{Translation invariance:}
     For each $z\in \Z^d$ we have $\P\circ\tau_z^{-1}=\P$.
   \item\label[propertyM]{M2} \emph{Existence of densities:}
     There are $\sigma$-finite measures~$\mu_z$, $z\in\Z^d$,
     on $(\cA,\Borel(\cA))$ such that for each $\Lambda\in\cF$ the measure
     $\P_\Lambda:=\P\circ\Pi_{\Lambda}^{-1}$
     is absolutely continuous with respect to
     $\mu_\Lambda:=\Tensor_{z\in\Lambda}\mu_z$ on
     $(\Omega_\Lambda,\Borel(\Omega_\Lambda))$.
     We denote the density function by
     $\rho_\Lambda:=\frac{\d\P_\Lambda}{\d\mu_\Lambda}$.
     The measure~$\P_\Lambda$
     is called a \emph{marginal measure} of~$\P$.
     It is defined on $(\Omega_\Lambda,\Borel(\Omega_\Lambda))$,
     where $\Borel(\Omega_\Lambda)$ is again  the product $\sigma$-algebra.
   \item\label[propertyM]{M3} \emph{Independence at a distance:}
     There exists $r\ge0$ such that for all $n\in\N$
     and non-empty $\Lambda_1,\dotsc,\Lambda_n\subseteq \Z^d$
     with $\min\{\fd(\Lambda_i,\Lambda_j)\mid i\neq j\}> r$
     we have $\rho_{\Lambda} =\prod_{j=1}^n\rho_{\Lambda_j}$,
     where $\Lambda=\bigcup_{j=1}^n \Lambda_j$.
  \end{enumerate}
\end{Assumption}

\begin{Remark}~
  \begin{itemize}
  \item The constant $r\ge0$ in~\labelcref{M3} can be interpreted as
    \emph{correlation length}.
    In particular, if $r=0$ this property implies
    that the colors of the vertices are independent.
  \item Conditions \labelcref{M2,M3} are trivially satisfied, if~$\P$
    is a product measure.
  \item For examples of measures $\P$ satisfying \labelcref{M1,M2,,M3}
    we refer to \cref{example:density}.
  \end{itemize}
\end{Remark}

In the following we deal with partial orderings on~$\Omega$
and~$\Omega_\Lambda$, $\Lambda\in\cF$.
We write $\omega\le\omega'$ for $\omega,\omega'\in\Omega$
if we have $\omega_z\le\omega'_z$ for all $z\in \Z^d$,
and analogously for $\Omega_\Lambda$.

We consider the Banach space
\begin{equation*}
  \B:=\{F\from\R\to\R\mid\text{$F$ right-continuous and bounded}\}\text,
\end{equation*}
which is equipped with supremum norm~$\norm{\argmt}:=\norm[\infty]{\argmt}$.

We now introduce a certain class of $\B$-valued functions.

\begin{Definition}\label{def:admissible}
A function $f \from\cF\times\Omega\to\B$ is called \emph{admissible}
if the following conditions are satisfied
\begin{enumerate}[(i)]
  \item\label[property]{transitive}%
    translation invariance:
    For $\Lambda\in\cF$, $z\in \Z^d$ and $\omega\in\Omega$ we have
    \begin{equation*}
      f(\Lambda+ z,\omega)=f(\Lambda,\tau_z\omega)\text.
    \end{equation*}
  \item\label[property]{local}%
    locality: For all $\Lambda\in\cF$ and $\omega,\omega'\in\Omega$
    satisfying $\Pi_\Lambda(\omega)=\Pi_\Lambda(\omega')$ we have
    \begin{equation*}
      f(\Lambda,\omega)=f(\Lambda,\omega')\text.
    \end{equation*}
  \item\label[property]{extensive}%
    almost additivity: There exists a function $b=b_f\from\cF\to\Ico0\infty$
    such that for arbitrary $\omega\in\Omega$,
    pairwise disjoint $\Lambda_1,\dotsc,\Lambda_n\in\cF$ and
    $\Lambda:=\bigcup_{i=1}^n\Lambda_i$ we have
    \begin{equation*}
      \Bignorm{f(\Lambda,\omega)-\sum_{i=1}^nf(\Lambda_i,\omega)}
        \le\sum_{i=1}^nb(\Lambda_i)\text,
    \end{equation*}
    and~$b$ satisfies
    \begin{itemize}
    \item $b(\Lambda)=b(\Lambda +z )$
      for arbitrary $\Lambda\in\cF$ and $z\in \Z^d$,
    \item $\exists D=D_f>0$ with $b(\Lambda)\le D\setsize\Lambda$
      for arbitrary $\Lambda\in\cF$,
    \item $\lim_{i\to\infty}b(\Lambda_i)/\setsize{\Lambda_i}=0$,
      if $(\Lambda_i)_{i\in\N}$ is a sequence of cubes
       with strictly increasing side length.
    \end{itemize}
  \item\label[property]{monotone}%
    coordinatewise monotonicity:
    There exists a sign vector $s\in\{-1,1\}^{\Z^d}$
    such that for all $\Lambda\in\cF$
    and all $\omega,\omega'\in\Omega$, $z\in\Lambda$ and $E\in\R$ we have
    \begin{equation*}
      \omega_{\Lambda\setminus\{z\}}=\omega'_{\Lambda\setminus\{z\}},
      \omega_z\le\omega'_z\quad\implies\quad
      \begin{cases}
        f(\Lambda,\omega)(E)\le f(\Lambda,\omega)(E)
          &\text{if $s(z)=1$, whereas}\\
        f(\Lambda,\omega)(E)\ge f(\Lambda,\omega)(E)
          &\text{if $s(z)=-1$.}
      \end{cases}
    \end{equation*}
  \item\label[property]{bounded}
    boundedness: We have
    \begin{equation*}
      \sup_{\omega\in\Omega}\norm{f(\{0\},\omega)}
        <\infty\text.
    \end{equation*}
\end{enumerate}
\end{Definition}

\begin{Remark}\label{rem:admissible}
\begin{asparaitem}
\item \Cref{local} can be formulated as follows:
  $f(\Lambda,\argmt)\colon \Omega \to \B$ is $\Pi_\Lambda$-measurable.
  \Cref{local} also enables us to define $f_\Lambda\from\Omega_\Lambda\to\B$
  by $f_\Lambda(\omega_\Lambda):=f(\Lambda,\omega)$ with $\Lambda\in\cF$
  and $\omega\in\Omega$.
  If $\setsize\Lambda=1$, we can identify~$\Omega_\Lambda=\cA^\Lambda$
  and~$\cA$.
  With this notation,~\labelcref{bounded} above translates into
  \begin{equation*}
    \sup_{a\in\cA}\norm{f_{\lbrace0\rbrace}(a)}<\infty\text.
  \end{equation*}
\item In our examples in \cref{sec:Ecf,sec:ccf},~$b(\Lambda)$
  from~\labelcref{extensive} can be chosen proportional
  to~$\setsize{\partial^1\Lambda}$,
  the size of the $1$-boundary of~$\Lambda\in\cF$.
  Accordingly, we call the function~$b=b_f$ \emph{boundary term} for~$f$.
  For quantitative estimates it is handy to require additionally
  that there exists $r'=r'_f\in\N$ and $D'=D'_f>0$ such that
  \begin{equation*}
    b(\Lambda)\le D'\setsize{\partial^{r'}\Lambda}
  \end{equation*}
  for all cubes~$\Lambda\in\cF$.
  We call such a function~$b$ a \emph{proper boundary term}.
\item
  It is natural to call $f$ with the property
  \begin{equation*}
    f(\Lambda,\omega)
      =\sum_{i=1}^nf(\Lambda_i,\omega)
  \end{equation*}
  \emph{additive} with respect to the disjoint decomposition
  $(\Lambda_i)_{i=1,\dotsc,n}$ of $\Lambda\in\cF$.
  Hence, it is again natural to call~\labelcref{extensive} almost additive,
  since the error term $\sum_{i=1}^nb(\Lambda_i)$ is in some sense small.
  Alternatively,~\labelcref{extensive} could be called \emph{low complexity}
  or \emph{semi-locality} of~$f$.
  The information contained in $f(\Lambda_1), \dotsc, f(\Lambda_n)$
  does not differ much from the information contained in~$f(\Lambda)$.
\item
  Our examples in \cref{sec:Ecf,sec:ccf}
  deal with antitone admissible functions,
  i.\,e.~\labelcref{monotone} is satisfied with $s(z)=-1$ for all $z\in\Z^d$.
\item If~$f$ is admissible, then
  \begin{align}\label{def:Cf}
    \K_f:=\sup\Bigl\{\tfrac{\norm{f(\Lambda,\omega)}}{\setsize{\Lambda}}
             \Bigm|\omega\in\Omega,\Lambda\in\cF\setminus\{\emptyset\}
             \Bigr\}
    <\infty\text.
  \end{align}
  To see this, we choose $\Lambda\in\cF$ and $\omega\in\Omega$
  arbitrarily and calculate as follows:
  \begin{align*}
   \norm{f(\Lambda,\omega)}
  &\le\Bignorm{f(\Lambda,\omega)-\sum_{z\in\Lambda}
     f(\{z\},\omega)}+\Bignorm{\sum_{z\in\Lambda}f(\{z\},\omega)}\\
  &\le\sum_{z\in\Lambda}b(\{z\})+\sum_{z\in\Lambda}\norm{f(\{z\},\omega)}\\
  &\le D\setsize{\Lambda}+\sum_{z\in\Lambda}\norm{f(\{0\},\tau_{-z}\omega)}
   \le\bigl(D+\sup_{\omega\in\Omega}\norm{f(\{0\},\omega)}\bigr)
       \setsize{\Lambda}\text.
  \end{align*}
  Thus, $\K_f\le D+\sup_{\omega\in\Omega}\norm{f(\{0\},\omega)}<\infty$.
\end{asparaitem}
\end{Remark}

\begin{Definition}
  For $\K,D,D'>0$ and $r'\in\N$,
  we form the set
  \begin{equation*}
    \cU_{\K,D,D',r'}
    =
    \{f:\cF \times \Omega\to\B \mid f\text{ admissible with }
      \K_f\le \K, D_f\le D, D'_f\le D'\text{ and } r'_f\le r'\}
  \end{equation*}
  where $K_f$, $D_f$, $D'_f $ and $r'_f$
  are the constants from \cref{def:admissible} associated to~$f$.
\end{Definition}

Let us state the main theorem of this paper.

\begin{Theorem}\label{thm:main}
  Let~$\cA\subseteq\R$, $\Omega:=\cA^{\Z^d}$ and $(\Omega,\Borel(\Omega),\P)$
  a probability space such that~$\P$ satisfies \labelcref{M1,M2,,M3}
  with correlation length $r\in\N_0$,
  and let $f\colon\cF\times\Omega\to\B$ be an admissible function.
  Let further $\Lambda_n:=\Ico0n\isect\Z^d$ for $n\in\N$.
  Then there exists a set $\tilde\Omega\in\Borel(\Omega)$
  of full measure and a function $f^*\in\B$
  such that for every $\omega\in\tilde\Omega$ we have
  \begin{equation}\label{eqmain}
    \lim_{n\to\infty}\biggnorm{
      \frac{f(\Lambda_n,\omega)}{\setsize{\Lambda_n}}-f^*}
      =0\text.
  \end{equation}
\end{Theorem}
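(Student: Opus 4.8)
The plan is to prove \eqref{eqmain} in two stages: first establish convergence of the \emph{expectations} $\E[f(\Lambda_n,\argmt)]/\setsize{\Lambda_n}$ in the Banach space $\B$, and then upgrade this to almost sure convergence of the random quantities themselves by a concentration/large-deviations argument combined with Borel--Cantelli. The first stage is a purely deterministic (but Banach-space valued) ergodic-type argument: using almost additivity \labelcref{extensive}, translation invariance \labelcref{transitive}, and the cube-tiling geometry of $\Z^d$, one shows that $n\mapsto \E[f(\Lambda_n,\argmt)]/\setsize{\Lambda_n}$ is a Cauchy sequence in $\B$. Concretely, for $m$ dividing $n$ (or approximately so) one partitions the cube $\Lambda_n$ into $(n/m)^d$ translates of $\Lambda_m$, applies \labelcref{extensive} to compare $f(\Lambda_n,\omega)$ with the sum over the sub-cubes, uses translation invariance of $\P$ and of $f$ to identify all the expectations of the pieces, and controls the error by $\sum_i b(\Lambda_m)/\setsize{\Lambda_n} = b(\Lambda_m)/\setsize{\Lambda_m}$, which tends to $0$ as $m\to\infty$ by the third bullet in \labelcref{extensive}. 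Handling the case where $m$ does not divide $n$ requires the standard Van Hove / F\o lner correction: one fills $\Lambda_n$ with a maximal disjoint family of translates of $\Lambda_m$ and bounds the leftover region, using $\K_f<\infty$ from \eqref{def:Cf} and $D_f<\infty$ to control its contribution. I would phrase this so it works for any almost additive, translation invariant, bounded $\B$-valued $f$, ignoring monotonicity for now; call the resulting limit $f^*\in\B$.

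For the second stage, I would show that for each fixed $E\in\R$ the real random variable $\omega\mapsto f(\Lambda_n,\omega)(E)/\setsize{\Lambda_n}$ converges $\P$-almost surely to $f^*(E)$, and then use monotonicity in $E$ (right-continuity of all the functions involved) to turn pointwise a.s.\ convergence on a countable dense set into \emph{uniform} a.s.\ convergence — this is exactly the Glivenko--Cantelli step, analogous to P\'olya's theorem. The almost sure convergence at fixed $E$ is obtained by a deviation estimate: split $\Lambda_n$ into a grid of cubes $\Lambda_m$ of intermediate scale $m = m(n)\to\infty$, note that cubes whose $\ell^1$-distance exceeds the correlation length $r$ from \labelcref{M3} carry independent data (after enlarging them by $r$), so that the sum $\sum_i f(\Lambda_m^{(i)},\argmt)(E)/\setsize{\Lambda_n}$ is a sum of bounded (by $\K_f m^d$) random variables that split into $O(1)$ many groups, each of which is an i.i.d.\ sum; apply Hoeffding's inequality to each group, get an exponential bound $\P(|\,\cdot - \E[\cdot]\,| > \ve) \le C\exp(-c\ve^2 n^d/m^d)$, choose $m(n)$ growing slowly enough (e.g.\ $m(n) = \lfloor n^{\alpha}\rfloor$ with $\alpha$ small) that these probabilities are summable, and conclude via Borel--Cantelli along a suitable subsequence, then interpolate between consecutive $n$ using monotonicity in $\omega$ (property \labelcref{monotone}) and boundedness. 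Taking a countable intersection over $E$ in a dense set and over the monotonicity interpolations yields the full-measure set $\tilde\Omega$.

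The main obstacle, I expect, is the combination of two sources of uniformity in stage two: one must simultaneously control the deviation uniformly in $E\in\R$ (handled by Glivenko--Cantelli / P\'olya, i.e.\ by monotonicity and right-continuity, after reducing to a countable dense set) \emph{and} bridge from the sparse subsequence of $n$'s where Borel--Cantelli applies to all $n$, for which one again leans on the monotonicity property \labelcref{monotone} together with the almost-additive error bounds to sandwich $f(\Lambda_n,\omega)(E)$ between $f(\Lambda_{n'},\omega)(E)$ for nearby $n' $ in the subsequence. A secondary technical point is bookkeeping the ``independence at a distance'' reduction cleanly: the sub-cubes of side $m$ must be separated by gap $>r$, which costs a fixed proportion of the volume only in the limit $m\to\infty$, so the intermediate scale $m(n)$ must be chosen to go to infinity while still $m(n)^d = o(n^d/\log n)$ for summability; making all these rate choices consistent is the delicate part, but it is exactly the kind of estimate that the hypotheses $\K_f\le\K$, $D_f\le D$ are designed to make uniform.
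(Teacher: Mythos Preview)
Your Stage~1 is essentially right and matches the paper's Lemma~6.1: almost additivity plus tiling shows that the expectations $\E[f(\Lambda_m^r,\argmt)]/m^d$ form a Cauchy sequence in~$\B$.

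The genuine gap is in Stage~2, at the step where you upgrade pointwise-in-$E$ almost sure convergence to uniform-in-$E$ convergence. You invoke ``monotonicity in~$E$'' and a P\'olya-type argument, but admissible functions are \emph{not} assumed monotone in~$E$: the space~$\B$ consists of all bounded right-continuous functions, and right-continuity alone does not take you from convergence on a countable dense set to uniform convergence. Property~\labelcref{monotone} is coordinatewise monotonicity in~$\omega$, not in~$E$; it also cannot be used to interpolate between different~$n$ for the same~$\omega$, since it compares different configurations, not different domains. A related symptom: your Hoeffding argument never uses hypothesis~\labelcref{M2}, yet~\labelcref{M2} is essential to the theorem.

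The paper's argument is structurally different at exactly this point. After stripping an $r$-boundary from each mesoscopic sub-cube so that the blocks carry independent data by~\labelcref{M3}, one observes that for each fixed~$E$ the map $\omega_{\Lambda_m^r}\mapsto f_m^r(\omega_{\Lambda_m^r})(E)$ is a bounded monotone function on~$\R^k$, $k=\lvert\Lambda_m^r\rvert$, by~\labelcref{monotone}. Thus the whole family $\{f_m^r(\argmt)(E):E\in\R\}$ sits inside the class~$\mathcal M$ of bounded monotone functions on~$\R^k$, and the multivariate Glivenko--Cantelli theorem of DeHardt--Wright (\cref{wright:LDP}) gives convergence of the empirical measure \emph{uniformly over all of~$\mathcal M$}, hence uniformly in~$E$, in one stroke. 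Hypothesis~\labelcref{M2} enters precisely here, via \cref{strictlymonotone}, to verify the ``no mass on strictly monotone graphs'' condition that DeHardt--Wright requires. So the crucial monotonicity lives in $\omega$-space, and exploiting it needs the high-dimensional empirical-process machinery rather than a one-dimensional P\'olya argument.
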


\begin{Remark}\label{rem:measurable}
\begin{asparaitem}
\item The following special case illustrates the relation to the
  Glivenko--Cantelli theorem.
  Let $\P:=\bigotimes_{z\in\Z}\mu$ be a product measure on $\bigtimes_{\ZZ} \RR$,
  where~$\mu$ is a probability measure on~$\R$,
  and let $f(\Lambda,\omega)(E):=\sum_{z\in\Lambda}\ifu{\Ioc{-\infty}E}(\omega_z)$
  for $\Lambda\in\cF$, $\omega\in\Omega$ and $E\in\R$.
  Then~$f$ is an admissible function.
  The quantities
  $f(\Lambda_n,\omega)(E)/\setsize{\Lambda_n}
    =\setsize{\Lambda_n}^{-1}\sum_{z\in\Lambda_n}
      \dirac{\omega_z}(\Ioc{-\infty}E)$
  turn out to be the distribution functions of empirical measures.
  \cref{thm:main}
  now states that the empirical distribution functions converge uniformly.
  The limit~$f^*$ is of course the distribution function of~$\mu$:
  $f^*(E)=\mu(\Ioc{-\infty}E)$ for all $E\in\R$.
\item We emphasize that the statement of \cref{thm:main}
  does not contain the measurability of the set
  \begin{equation*}
    \Bigl\{\omega\in\Omega\Bigm|\lim_{n\to\infty}
      \Bignorm{\tfrac{f(\Lambda_n,\omega)}{\setsize{\Lambda_n}}-f^*}=0
    \Bigr\}\text.
  \end{equation*}
  Instead, the claim is that this set contains a measurable subset~%
  $\tilde\Omega$ of full measure.
  If the probability space was complete, the above set would be measurable, too.
  We write all almost sure statements in explicit fashion,
  in order to avoid a completeness assumption and measurability issues.
\item The limit function~$f^*$ inherits the boundedness from~$f$,
  since there exists $\omega\in\Omega$ such that
  \begin{equation*}
    \norm{f^*}
      \le\limsup_{n\to\infty}\Bigl(
        \Bignorm{\tfrac{f(\Lambda_n,\omega)}{\setsize{\Lambda_n}}-f^*}
        +\Bignorm{\tfrac{f(\Lambda_n,\omega)}{\setsize{\Lambda_n}}}\Bigr)
      \le\K_f\text.
  \end{equation*}
\item Note that \cref{thm:main} readily generalizes to absolutely convergent
  linear combinations of admissible functions in the following sense.
  Let $\K,\alpha_j\in\R$, $j\in\N$ such that
  $\sum_{j\in\N}\abs{\alpha_j}<\infty$ and~$f_j$, $j\in\N$,
  admissible functions such that $\K_{f_j}\le \K$ for all $j\in\N$.
  For each $j\in\N$, \cref{thm:main} provides a limit function~$f_j^*$.
  We let $f:=\sum\alpha_jf_j$ and $f^*:=\sum\alpha_jf_j^*$.
  Now fix $\ve>0$, find $J\in\N$ such that
  $\sum_{j=J}^\infty\abs{\alpha_j}<\frac\ve{2\K}$
  and note that by the triangle inequality
  for all $\omega\in\tilde\Omega$ we have
  \begin{align*}
    \limsup_{n\to\infty}
      \Bignorm{\tfrac{f(\Lambda_n,\omega)}{\setsize{\Lambda_n}}-f^*}
   &\le\limsup_{n\to\infty}\biggl(\sum_{j=1}^{J-1}\abs{\alpha_j}
      \Bignorm{\tfrac{f_j(\Lambda_n,\omega)}{\setsize{\Lambda_n}}-f_j^*}
      +\sum_{j=J}^\infty\abs{\alpha_j}\Bigl(
        \Bignorm{\tfrac{f_j(\Lambda_n,\omega)}{\setsize{\Lambda_n}}}
          +\norm{f_j^*}\Bigr)\biggr)\\
   &<\ve\text.
  \end{align*}
  This shows that the coordinate-wise monotonicity~\labelcref{monotone} can be somewhat weakened.
  \item By a Borel--Cantelli argument employing \cref{wright:LDP},
    the sequence of cubes~$\Lambda_n$
    can in fact be replaced by an arbitrary van Hove sequence~%
    $(\Lambda_n)_{n\in\N}$, as long as for each~$\Lambda_n$
    there exists a collection of translates which tiles~$\Z^d$.
    The set~$\tilde\Omega$ will depend on the sequence, of course.
\end{asparaitem}
\end{Remark}


Next, we state that for functions with proper boundary terms
the convergence in \cref{thm:main} can be quantified by error terms.
For the definition of the empirical measure $L_{m,n}^{r,\omega}$
see~\eqref{def:empmeasure:new} and the notation $\spr{f}{\nu}$ is introduced in~\eqref{def:sprfnu}.
\begin{Theorem}\label{cor:main}
  Let~$\cA\subseteq\R$, $\Omega:=\cA^{\Z^d}$ and $(\Omega,\Borel(\Omega),\P)$
  a probability space such that~$\P$ satisfies \labelcref{M1,M2,,M3}
  with correlation length $r\in\N_0$,
%
%
  and let $\Lambda_n:=\Ico0n\isect\Z^d$ for $n\in\N$.
  Let $\K,D,D'>0$ and $r'\in\N$.
  Then, there exists a set $\tilde\Omega\in\Borel(\Omega)$
  of full probability such that, for each $m,n\in\N$ with $n>2m>4r$
  and $\omega\in\tilde\Omega$, we have
  \begin{multline*}
    \sup_{f\in\cU_{\K,D,D',r'}}
      \Bignorm{\frac{f(\Lambda_n,\omega)}{\setsize{\Lambda_n}}- f^*}
      \le2^{2d+1}\Bigl(\frac{(2\K+D)m^d+D'{r'}^d}{n-2m}
        +\frac{2(\K+D)r^d+3D'{r'}^d}{m-2r}\Bigr)\\
        +\sup_{f\in\cU_{\K,D,D',r'}}\frac{\bignorm{\spr{f_{\Lambda_m^r}}
        {L_{m,n}^{r,\omega}-\P_{\Lambda_m^r}}}}{\setsize{\Lambda_m}}\text,
  \end{multline*}
  where $f^*$ is the limit given by \cref{thm:main} applied to~$f$.
  Furthermore, for all $m\in\N$ and $\omega\in\tilde\Omega$,
  \begin{equation*}
    \lim_{n\to\infty}\sup_{f\in\cU_{\K,D,D',r'}}
      \bignorm{\spr{f_{\Lambda_m^r}}{L_{m,n}^{r,\omega}-\P_{\Lambda_m^r}}}
      =0\text.
  \end{equation*}
  Even more, for each $\epsilon >0$ there exist $a=a(\epsilon,m,\K)>0$
  and $b=b(\epsilon,m,\K)$ such that for all $n\in\N$
  there is a measurable set $\Omega(\epsilon,n)$ with
  $\P(\Omega(\epsilon,n))\ge1-b\exp(-a\floor{n/m}^d)$ and
  \begin{equation*}
    \sup_{f\in\cU_{\K,D,D',r'}}
      \bignorm{\spr{f_{\Lambda_m^r}}{L_{m,n}^{r,\omega}-\P_{\Lambda_m^r}}}
      \le\epsilon
      \qtext{for all $\omega \in \Omega(\epsilon,n)$.}
  \end{equation*}
\end{Theorem}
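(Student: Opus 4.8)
The plan is to decompose the statement into three nested assertions and prove them in order, reducing everything to a single application of Wright's large deviations estimate (\cref{wright:LDP}) together with the geometric tiling arguments of \cref{secboundary}. First I would establish the deterministic ``geometric'' inequality: fix $\omega$, $m,n$ with $n>2m>4r$, and tile $\Lambda_n$ (up to a boundary remainder) by $\lfloor n/m\rfloor^d$ translates of the cube $\Lambda_m$. Using almost additivity~\labelcref{extensive}, the bound $b(\Lambda)\le D'\setsize{\partial^{r'}\Lambda}$ from the definition of a proper boundary term, and $\K_f\le\K$, one expresses $f(\Lambda_n,\omega)/\setsize{\Lambda_n}$ as an average over the tiles of $f(\Lambda_m+z,\omega)/\setsize{\Lambda_m}$ plus explicit error terms coming from (a) the untiled boundary of $\Lambda_n$, of relative size $O(m^d/(n-2m))$, and (b) the almost-additivity defect, of relative size $O({r'}^d/(n-2m))$. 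A second tiling step inside each $\Lambda_m$, isolating the $r$-interior $\Lambda_m^r$ so that~\labelcref{M3} applies, produces the $O(r^d/(m-2r))$ and $O({r'}^d/(m-2r))$ terms. The combinatorial prefactor $2^{2d+1}$ should fall out of counting boundary layers in $\Z^d$. Since all these bounds are uniform in $f\in\cU_{\K,D,D',r'}$, taking the supremum over $f$ and comparing with the limit $f^*$ (which by \cref{thm:main} exists and, by translation invariance, is the same for the averaged quantity) yields the displayed inequality, with the genuinely probabilistic content concentrated in the leftover term $\bignorm{\spr{f_{\Lambda_m^r}}{L_{m,n}^{r,\omega}-\P_{\Lambda_m^r}}}/\setsize{\Lambda_m}$.

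For the second assertion — almost sure convergence of that leftover term to $0$ as $n\to\infty$ — I would note that $L_{m,n}^{r,\omega}$ is the empirical measure on $\Omega_{\Lambda_m^r}$ formed from the $\lfloor n/m\rfloor^d$ tiles, which by~\labelcref{M3} (the tiles are $r$-separated) are i.i.d.\ with common law $\P_{\Lambda_m^r}$. The pairing $\spr{f_{\Lambda_m^r}}{\,\cdot\,}$ runs over the class $\{f_{\Lambda_m^r}\mid f\in\cU_{\K,D,D',r'}\}$ of $\B$-valued functions on $\Omega_{\Lambda_m^r}$, and by~\labelcref{monotone} each coordinate is monotone in $\omega_z\in\cA\subseteq\R$; this is exactly the hypothesis under which Wright's multivariate Glivenko--Cantelli / DKW-type estimate (\cref{strictlymonotone}, \cref{wright:LDP}) gives uniform convergence of the empirical expectations over this monotone class. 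Hence $\sup_{f}\bignorm{\spr{f_{\Lambda_m^r}}{L_{m,n}^{r,\omega}-\P_{\Lambda_m^r}}}\to0$ a.s., and I would record the null set as the complement of a set $\tilde\Omega$ of full measure, taking a countable intersection over $m\in\N$.

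For the third assertion — the explicit exponential rate — I would invoke the large deviations form of the same theorem, \cref{wright:LDP}, applied to the $\lfloor n/m\rfloor^d$ i.i.d.\ copies: for each $\epsilon>0$ it furnishes constants $a,b>0$ depending only on $\epsilon$, on the dimension of the index set $\Lambda_m^r$ (hence on $m$), and on the bound $\K$ controlling the functions, such that the event $\{\sup_f\bignorm{\spr{f_{\Lambda_m^r}}{L_{m,n}^{r,\omega}-\P_{\Lambda_m^r}}}>\epsilon\}$ has probability at most $b\exp(-a\lfloor n/m\rfloor^d)$; its complement is the desired $\Omega(\epsilon,n)$. The main obstacle is making the reduction to Wright's theorem clean: one must verify that the map $f\mapsto f_{\Lambda_m^r}$ sends $\cU_{\K,D,D',r'}$ into a class of functions on $\R^{\setsize{\Lambda_m^r}}$ that is coordinatewise monotone with a uniform supremum bound, so that the hypotheses of \cref{wright:LDP} are met uniformly; the constants $a,b$ then depend on $m$ only through $\setsize{\Lambda_m^r}\le m^d$ and on $\K$, exactly as claimed. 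The accounting of boundary-layer cardinalities in the first step, while routine, is where the explicit constants must be tracked carefully to land on the stated $2^{2d+1}(\cdots)$ bound.
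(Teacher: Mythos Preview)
Your outline correctly identifies the tiling step (this is the paper's \cref{la:empmeasure}) and the probabilistic step via \cref{wright:LDP} and \cref{strictlymonotone} (this is the paper's \cref{thm:mainGK}); the second and third paragraphs match the paper's argument essentially verbatim. However, the first paragraph has a genuine gap: you are missing one full leg of the triangle inequality.

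After tiling $\Lambda_n$ by copies of~$\Lambda_m$ and passing to the $r$-interiors, what you obtain is a bound on
\[
  \Bignorm{\frac{f(\Lambda_n,\omega)}{\setsize{\Lambda_n}}
           -\frac{\spr{f_m^r}{L_{m,n}^{r,\omega}}}{m^d}}\text,
\]
and the Glivenko--Cantelli step then carries $L_{m,n}^{r,\omega}$ to $\P_{\Lambda_m^r}$. But $\spr{f_m^r}{\P_{\Lambda_m^r}}/m^d$ is \emph{not} equal to~$f^*$; it merely converges to~$f^*$ as $m\to\infty$. Your parenthetical ``by translation invariance, is the same for the averaged quantity'' does not bridge this: translation invariance only tells you the expectation is independent of the location of the cube, not that the finite-$m$ expectation coincides with the thermodynamic limit. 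Nor does \cref{thm:main} supply a rate in~$m$.

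The paper handles this with a separate argument (\cref{lemma3approx}): one observes that $F(\Lambda):=\spr{f_\Lambda}{\P_\Lambda}$ inherits almost additivity from~$f$, and a second tiling (of a large cube~$\Lambda_M$ by copies of~$\Lambda_m$, followed by $M\to\infty$) yields the explicit bound
\[
  \Bignorm{\frac{\spr{f_m^r}{\P_{\Lambda_m^r}}}{m^d}-f^*}
    \le\frac{b(\Lambda_m^r)}{m^d}
      +(\K_f+D)\frac{\setsize{\partial^r(\Lambda_m)}}{m^d}\text.
\]
This is what produces the second copy of the $(\K+D)r^d$ and $D'{r'}^d$ contributions and accounts for the coefficients $2(\K+D)$ and~$3D'$ in the $(m-2r)^{-1}$ term of the stated bound; the $\Lambda_m\to\Lambda_m^r$ reduction inside \cref{la:empmeasure} alone would give smaller coefficients. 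Without this third step you cannot land on the displayed inequality with~$f^*$ on the left-hand side.
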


\begin{Remark}
  \begin{asparaitem}
  \item It would be interesting to find an optimal explicit expression for~$a$
    and~$b$ in terms of $\epsilon$, $m$, and~$\K$.
  \item As before, the monotonicity can be weakened, see \cref{rem:measurable}.
    Note in particular, that a convex combination of functions in
    $\cU_{\K,D,D',r'}$ still obeys the quantitative estimates given by
    $\K,D,D'$ and $r'$.
    The statement of \cref{cor:main} remains valid for the convex combination
    since the geometric part is derived without the use of monotonicity
    and the argument from \cref{rem:measurable}
    applies to the probabilistic part.
  \end{asparaitem}
\end{Remark}

\begin{Corollary}
  Under the conditions of \cref{cor:main},
  there exists a set $\tilde\Omega\in\cB(\Omega)$ with $\P(\tilde\Omega)=1$
  such that for all $\omega\in\tilde\Omega$, we have
  \begin{equation}\label{eq:reformulated}
    \sup_{f\in\cU_{K,D,D',r'}}\sup_{E\in\R}
      \biggabs{\frac{f(\Lambda_n,\omega)(E)}{\setsize{\Lambda_n}}-f^*(E)}
      \xto{n\to\infty}0\text.
  \end{equation}
  If furthermore for an admissible~$f$, $f(\Lambda_n,\omega)\from\R\to\R$
  is an isotone function for all~$\Lambda_n$ and $\omega\in\tilde\Omega$,
  then the limit function~$f^*\in\B$ is isotone, too.
  In particular, cummulative distribution functions are preserved.
\end{Corollary}
\begin{proof}
  By \cref{cor:main}, we have
  \begin{equation*}
    0\le
    \lim_{n\to\infty}\sup_{f\in\cU_{K,D,D',r'}}
      \Bignorm{\frac{f(\Lambda_n,\omega)}{\setsize{\Lambda_n}}-f^*}
    \le2^{2d+1}\frac{2(K+D)r^d+3D{r'}^d}{m-2r}
    \xto{m\to\infty}0\text.
  \end{equation*}
  Recall that the norm in $\B$ is the sup norm
  $\norm\argmt=\sup_{E\in\R}\abs{\argmt(E)}$ to see \eqref{eq:reformulated}.
  \par
  If the functions
  $f_{n,\omega}:=f(\Lambda_n,\omega)/\setsize{\Lambda_n}\from\R\to\R$
  are increasing, then for all $E,E'\in\R$ with $E<E'$ and $\varepsilon>0$
  we find $n\in\N$ such that $\norm{f_{n,\omega}-f^*}<\ve/2$ and
  \begin{equation*}
    f^*(E)
    \le f_n(E)+\ve/2
    \le f_n(E')+\ve/2
    \le f^*(E')+\ve\text.
  \end{equation*}
  Since $\ve>0$ was arbitrary, $f^*$ is increasing, too.
\end{proof}

\section{Illustration of the idea of proof}\label{sec:wunsch}

Let us consider the exemplary situation of dimension $d=1$
and independently chosen colors, i.\,e.,
the constant~$r$ from~\labelcref{M3} equals~$0$.
In this case, the idea of the proof of \cref{thm:main}
is illustrated in the following line:
\begin{align}\label{wunsch}
  \frac1{mk}f\bigl(\Ico0{mk},\omega\bigr)&
    \stackrel{(1)}{\approx}\frac1 m\spr{f_m}{L_{m,mk}^\omega}
    \stackrel{(2)}{\approx}\frac1 m\spr{f_m}{\P_m}
    \stackrel{(3)}{\approx} f^*\text,
\end{align}
where $0\ll m\ll k$.
Assume that $n=mk$ and $\Lambda_n=\Ico0n$.
Then the left hand side in~\eqref{wunsch}
equals the approximant in \cref{thm:main}.
The function $f_m\from\Omega_{\Ico0m}\to\B$ is defined by
$f_m(\omega):=f(\Ico0m,\omega')$
for $\omega'\in\Pi^{-1}_{\Ico0m}(\{\omega\})$,
cf.\ \cref{rem:admissible}.
$L_{m,n}^\omega(B)$ is the empirical probability measure
counting the number of occurrences of elements of $B\in\Borel(\Omega_{\Ico0m})$
at the positions $\Ico{jm}{(j+1)m}$, $j=0,1,\dotsc,k-1$ in~$\omega$,
i.\,e.
\begin{equation}\label{eq:def:empmeasure0}
  L_{m,n}^\omega \from \Borel(\Omega_{\Ico0m})\to\Icc01\textq,
  L_{m,n}^\omega
   :=\frac1k\sum_{j=0}^{k-1}\dirac{(\tau_{jm}\omega)_{\Ico0m}}
    =\frac1k\sum_{j=0}^{k-1}\dirac{\Pi_{\Ico0m} (\tau_{jm} \omega)}\text.
\end{equation}
We use the shortcut notation
\begin{equation*}
  \spr{f_m}{L_{m,n}^\omega}
    :=\int_{\Omega_{\Ico0m}}f_m(\omega')\,\d L_{m,n}^\omega(\omega')
    =\frac1k\sum_{j=0}^{k-1}f\bigl(\Ico{jm}{(j+1)m},\omega\bigr)
    \text.
\end{equation*}
Let us discuss the three approximation steps separately.
\begin{enumerate}[(1)]
\item
  The ``$\stackrel{(1)}\approx$''
  means that the two expressions are getting close to each other if~$m$
  increases.
  To see this we use almost additivity of an admissible function,
  cf.~\labelcref{extensive} of \cref{def:admissible}.
  The detailed calculations will be presented in \cref{secboundary}.
\item In the second step we compare the empirical measure $L_{m,mk}^\omega$
  with the marginal measure $\P_{m}:=\P_{\Ico0m}$.
  The method of choice is a multivariate Glivenko--Cantelli theorem,
  which we apply in a version of DeHardt and Wright.
  In this particular situation it shows that for increasing~$k$
  the expression $\spr{f_m}{L_{m,mk}^\omega}$
  converges to $\spr{f_m}{\P_m}$ almost surely.
  This approximation step is explicitly discussed in \cref{GC}.
\item In the last step we make intensive use of almost additivity of~$f$
  in order to obtain that $(\spr{f_m}{\P_m}/m)_m$ is Cauchy sequence in~$\B$.
  As~$\B$ is a Banach space, we can identify the limit with an element
  $f^*\in\B$.
  The details are found in \cref{sec:Aadd}.
\end{enumerate}

\begin{Remark}[Frequencies]
  From the discussion of step~(1)
  above it is clear that the empirical measure
  counts occurrences of patterns at the
  positions $\Ico{jm}{(j+1)m}$ for $j=0,1,\dotsc,k-1$.
  Thus, the corresponding sets are disjoint
  and their union covers the whole interval $\Ico0n$, $n=mk$.
  In this sense, the present technique of counting occurrences
  differs from the counting in certain papers.
  For instance in \cite{LenzMV-08,LenzSV-10},
  the authors count occurrences of patterns
  at each possible consecutive position,
  ignoring the fact that they may overlap.
  In our setting, this would correspond to the situation
  where the empirical measure is defined
  to count occurrences at all positions
  $\Ico j{j+m}$, $j=0,1,\dotsc,m(k-1)$, i.\,e.,
  \begin{align}\label{eq:def:barL}
    \bar L_{m,n}^{\omega}\from\Borel(\Omega_{\Ico0m})\to\Icc01\textq,
    \bar L_{m,n}^\omega
      :=\frac1{m(k-1)+1}\sum_{j=0}^{m(k-1)}\dirac{(\tau_j\omega)_{\Ico0m}}
      \text.
  \end{align}
  However, both ways of counting can be related to each other.
  The link can be seen best by comparing with the average
  \begin{align}\label{eq:barL1}
    \frac1{m(k-1)}\sum_{j=1}^{m(k-1)}\dirac{(\tau_j\omega)_{\Ico0m}}
    =\frac1m\sum_{i=1}^{m}\;
      \frac1{k-1}\sum_{j=0}^{k-2}\dirac{(\tau_{jm+i}\omega)_{\Ico0m}}
      \text,
  \end{align}
  where the first observation~$\delta_{\omega_{\Ico0m}}$ is discarded.
  Indeed, for large $n=mk$, the difference between~$\bar L_{m,n}^\omega$
  and~\eqref{eq:barL1} vanishes.
  The right hand side of~\eqref{eq:barL1} shows that $\bar L_{m,n}^\omega$
  is essentially a convex combination of empirical measures of the type~%
  \eqref{eq:def:empmeasure0}.
  As $k\to\infty$, all the empirical measures of type~\eqref{eq:def:empmeasure0}
  in~\eqref{eq:barL1} converge to the same limit~$\P_m$,
  rendering the convex combination harmless.
  Recall that in the approximation first the limit $k\to\infty$
  and afterwards the limit $m\to\infty$ is performed.
  This shows that the empirical measure defined in~\eqref{eq:def:barL}
  converges to the same limit as the empirical measures in~%
  \eqref{eq:def:empmeasure0}.
\end{Remark}

\section{Approximation via the empirical measure}\label{secboundary}

In the following we show how to estimate an admissible function~$f$
in terms of the empirical measure.
As in \cref{thm:main}, let $\Lambda_n=(\Ico0n\isect\Z)^d$ for each $n\in\N$.

Our aim is to approximate for $m\ll n$ the set~$\Lambda_n$
using translates of the set~$\Lambda_m$.
To this end, we define the grid
\begin{equation}\label{def:Tmn}
  T_{m,n}:=
    \{t\in m\Z^d\mid\Lambda_m+t\subseteq \Lambda_n\}
    \text.
\end{equation}
Thus, we have $\setsize{T_{m,n}}=\floor{n/m}^d$,
 $\Lambda_{\floor{n/m}m}
    =\dUnion_{t\in T_{m,n}}(\Lambda_m+t)
    =\Lambda_m+T_{m,n}$, and
\begin{equation}\label{eq:tilingconstruction}
    \Lambda_n\setminus\Lambda_{\floor{n/m}m}
      \subseteq\partial^m(\Lambda_n)
    \qtextq{or equivalently}
    \Lambda_n^m\subseteq\Lambda_{\floor{n/m}m}\text.
\end{equation}



As in \cref{rem:admissible}, define for an admissible~$f$
and $\Lambda\in\cF$ the function
\begin{equation}\label{eq:def:fLambda}
  f_\Lambda\from\Omega_{\Lambda}\to\B, \qquad f_\Lambda(\omega)
    :=f(\Lambda,\omega')\qqtext{where
      $\omega'\in\Pi_{\Lambda}^{-1}(\{\omega \})$.}
\end{equation}
By locality~\labelcref{local} of \cref{def:admissible},
$f_\Lambda$ is well-defined.
In the case~$\Lambda=\Lambda_n$, we write
\begin{equation}\label{eq:def:f_n}
  f_n:=f_{\Lambda_n} \qquad\text{and}\qquad f_n^m:=f_{\Lambda_n^m}
\end{equation}
for $m\in\N_0$.
Next, we introduce the empirical measure $L_{m,n}^\omega$ by setting
for $\omega\in\Omega$ and $m,n\in\N$:
\begin{equation}\label{def:Lmn}
  L_{m,n}^\omega \from \Borel(\Omega_{\Lambda_m})\to\Icc01\textq,
  L_{m,n}^\omega
    =\frac1{\setsize{T_{m,n}}}\sum_{t\in T_{m,n}}
      \dirac{(\tau_t \omega)_{\Lambda_m}}\text.
\end{equation}
Here, $\dirac\omega\from\Borel(\Omega_{\Lambda_m})\to\Icc01$
is the point measure on $\omega\in\Omega_{\Lambda_m}$.
In the same manner, we define $L_{m,n}^{r,\omega}$
as an adaption of $L_{m,n}^{\omega}$ which ignores the $r$-boundary of~$\Lambda_m$.
The precise definition is the following:
for $r\in\N_0$ we set
\begin{equation}\label{def:empmeasure:new}
  L_{m,n}^{r,\omega} \from \Borel(\Omega_{\Lambda_m^r})\to\Icc01\textq,
  L_{m,n}^{r,\omega}
    =\frac1{\setsize{T_{m,n}}}\sum_{t\in T_{m,n}}
       \dirac{(\tau_t \omega)_{\Lambda_m^r}}\text.
\end{equation}
The variable~$r$
we used here will eventually be the constant from~\labelcref{M3},
but here in \cref{secboundary} we do not need that specific value.

As illustrated before in~\cref{sec:wunsch} we use for $\Lambda\in\cF$,
a bounded and measurable $f\from\Omega_\Lambda\to\B$,
and a measure~$\nu$ on $(\Omega_\Lambda,\Borel(\Omega_\Lambda))$ the notation
\begin{equation}\label{def:sprfnu}
  \spr f\nu:=\int_{\Omega_{\Lambda}}f(\omega)\,\d\nu(\omega)\text.
\end{equation}

\begin{Lemma}\label{la:empmeasure}
  Recall $\Lambda_n:=(\Ico0n\isect\Z)^d$.
  For any admissible function $f\from\cF\times\Omega\to\B$
  we have, for all $\omega\in\Omega$
  and all $n,m,r\in\N$ with $n>2m$,
  \begin{multline}\label{eq:est1:empmeasure}
    \biggnorm{\frac{f(\Lambda_n,\omega)}{n^d}
      -\frac{\spr{f_m^r}{L_{m,n}^{r,\omega}}}{m^d}}
      \le\frac{b(\Lambda_{\floor{n/m}m})}
              {\setsize{\Lambda_{\floor{n/m}m}}}
        +\frac{(2\K_f+D)\setsize{\partial^m(\Lambda_{n}^m)}}
              {\setsize{\Lambda_n^m}}+{}\\
        +\frac{b(\Lambda_m)+b(\Lambda_m^r)
                +(\K_f+D)\setsize{\partial^r(\Lambda_m)}}
              {\setsize{\Lambda_m}}\text.
  \end{multline}
  Moreover,
  \begin{equation}\label{eq:limit:empmeasure}
    \lim_{m\to\infty}\lim_{n\to\infty}
      \biggnorm{\frac{f(\Lambda_n,\omega)}{n^d}
               -\frac{\spr{f_m^r}{L_{m,n}^{r,\omega}}}{m^d}}
    =0\text.
  \end{equation}
\end{Lemma}
\begin{proof}
  Let $\omega\in\Omega$ and $n,m,r\in\N$ be given such that $n>2m$.
  This condition ensures that $\Lambda_n^m\ne\emptyset$.
  First we verify~\eqref{eq:est1:empmeasure},
  and afterwards we show that this implies~\eqref{eq:limit:empmeasure}.
  By the triangle inequality we obtain
  \begin{multline}\label{eq:triangle}
    \biggnorm{\frac{f(\Lambda_n,\omega)}{n^d}
             -\frac{\spr{f_m^r}{L_{m,n}^{r,\omega}}}{m^d}}
    \le\biggnorm{\frac{f(\Lambda_n,\omega)}{\setsize{\Lambda_n}}
                -\frac{f(\Lambda_n,\omega)}
                      {\setsize{\Lambda_{\floor{n/m}m}}}}
      +\biggnorm{\frac{f(\Lambda_n,\omega)-f(\Lambda_{\floor{n/m}m},\omega)}
                      {\setsize{\Lambda_{\floor{n/m}m}}}}\\
      +\biggnorm{\frac{f(\Lambda_{\floor{n/m}m},\omega)}
                      {\setsize{\Lambda_{\floor{n/m}m}}}
                -\frac{\spr{f_m}{L_{m,n}^\omega}}{m^d}}
      +\frac{\norm{\spr{f_m}{L_{m,n}^\omega}-\spr{f_m^r}{L_{m,n}^{r,\omega}}}}
            {m^d}\text.
  \end{multline}
  We bound the four terms on the right hand side separately.
  To estimate the first term,
  we use $\setsize{\Lambda_{\floor{n/m}m}}\ge\setsize{\Lambda_n^m}$,
  see~\eqref{eq:tilingconstruction}, and obtain
  \begin{equation*}
    0\le\frac1{\setsize{\Lambda_{\floor{n/m}m}}}-\frac1{\setsize{\Lambda_n}}
    \le\frac1{\setsize{\Lambda_n^m}}-\frac1{\setsize{\Lambda_n}}
    =\frac{\setsize{\Lambda_n}-\setsize{\Lambda_n^m}}
          {\setsize{\Lambda_n}\setsize{\Lambda_n^m}}
    \le\frac{\setsize{\partial^m(\Lambda_n^m)}}
            {\setsize{\Lambda_n}\setsize{\Lambda_n^m}}
    \text.
  \end{equation*}
  Applying the bound given by~\eqref{def:Cf} in \cref{rem:admissible}, we get
  \begin{equation}\label{eq:bdest1}
    \biggnorm{\frac{f(\Lambda_n,\omega)}{\setsize{\Lambda_n}}
             -\frac{f(\Lambda_n,\omega)}
                   {\setsize{\Lambda_{\floor{n/m}m}}}}
    \le \K_f\frac{\setsize{\partial^m(\Lambda_n^m)}}{\setsize{\Lambda_n^m}}
    \text.
  \end{equation}
  In order to find an appropriate upper bound for the second term in~%
  \eqref{eq:triangle}
  we use almost additivity~\labelcref{extensive},
  the inclusion~\eqref{eq:tilingconstruction}
  and $\hat\Lambda_{m,n}:=\Lambda_n\setminus\Lambda_{\floor{n/m}m}$ to obtain
  \begin{align}
    \biggnorm{\frac{f(\Lambda_n,\omega)-f(\Lambda_{\floor{n/m}m},\omega)}
             {\setsize{\Lambda_{\floor{n/m}m}}}}
    &\le\frac{b(\Lambda_{\floor{n/m}m})}{\setsize{\Lambda_{\floor{n/m}m}}}
       +\frac{b(\hat \Lambda_{n,m})}{\setsize{\Lambda_{\floor{n/m}m}}}
       +\frac{\norm{f(\hat\Lambda_{n,m},\omega)}}
             {\setsize{\Lambda_{\floor{n/m}m}}}\nonumber\\
    &\le\frac{b(\Lambda_{\floor{n/m}m})}{\setsize{\Lambda_{\floor{n/m}m}}}
       +\frac{D\setsize{\hat\Lambda_{n,m}}}{\setsize{\Lambda_{\floor{n/m}m}}}
       +\frac{\K_f\setsize{\hat\Lambda_{n,m}}}{\setsize{\Lambda_{\floor{n/m}m}}}
         \nonumber\\
    &\le\frac{b(\Lambda_{\floor{n/m}m})}{\setsize{\Lambda_{\floor{n/m}m}}}
       +\frac{(\K_f+D)\setsize{\partial^m(\Lambda_{n}^m)}}{\setsize{\Lambda_n^m}}
    \text.
    \label{eq:bdest2}
  \end{align}
  To approximate the third term in~\eqref{eq:triangle},
  we calculate using translation invariance~\labelcref{transitive}
  of admissible functions
  \begin{align}
    \spr{f_m}{L_{m,n}^\omega}
    &=\int_{\Omega_{\Lambda_m}}f_m(\omega')\,\d L_{m,n}^\omega(\omega')
    =\frac1{\setsize{T_{m,n}}}\sum_{t\in T_{m,n}}\int_{\Omega_{\Lambda_m}}
      f_m(\omega')\,\d\dirac{(\tau_t\omega)_{\Lambda_m}}(\omega')\notag\\
    &=\frac1{\setsize{T_{m,n}}}\sum_{t\in T_{m,n}}
      f_m((\tau_t\omega)_{\Lambda_m})
    =\frac1{\setsize{T_{m,n}}}\sum_{t\in T_{m,n}}f(\Lambda_m +t,\omega)
    \text.\label{eq:Lmn}
  \end{align}
  This and~\labelcref{extensive} of \cref{def:admissible} give
  \begin{align}
    \biggnorm{\frac{f(\Lambda_{\floor{n/m}m},\omega)}
                   {\setsize{\Lambda_{\floor{n/m}m}}}
             -\frac{\spr{f_m}{L_{m,n}^\omega}}{\setsize{\Lambda_m}}}
    &=\frac1{\setsize{T_{m,n}}\setsize{\Lambda_m}}
      \biggnorm{f(\Lambda_{\floor{n/m}m},\omega)
               -\sum_{t\in T_{m,n}}f(\Lambda_m+t,\omega)}\notag\\
    &\le\frac1{\setsize{T_{m,n}}\setsize{\Lambda_m}}
      \sum_{t\in T_{m,n}}b(\Lambda_m+t)
    =\frac{b(\Lambda_m)}{\setsize{\Lambda_m}}
    \text.\label{eq:bdest3}
  \end{align}
  Finally, we estimate the fourth term.
  In the same way as in~\eqref{eq:Lmn} we obtain
  \begin{equation*}
    \spr{f_m^r}{L_{m,n}^{r,\omega}}
      =\frac1{\setsize{T_{m,n}}}\sum_{t\in T_{m,n}}f(\Lambda_m^r+t,\omega)
      \text.
  \end{equation*}
  Application of the triangle inequality,
  $\Lambda_m\setminus\Lambda_m^r=\Lambda_m\isect\partial^r(\Lambda_m)
    \subseteq\partial^r(\Lambda_m)$
  and~\labelcref{extensive} of \cref{def:admissible}
  as well as~\eqref{def:Cf} lead to
  \begin{align}
    \norm{\spr{f_m}{L_{m,n}^{\omega}}-\spr{f_m^r}{L_{m,n}^{r,\omega}}}
    &\le\frac1{\setsize{T_{m,n}}}\sum_{t\in T_{m,n}}
      \norm{f(\Lambda_m+t,\omega)-f(\Lambda_m^r+t,\omega)}\notag\\
    &\le\frac1{\setsize{T_{m,n}}}\sum_{t\in T_{m,n}}
      \bigl(b(\Lambda_m^r)+b(\Lambda_m\setminus\Lambda_m^r)
            +\norm{f((\Lambda_m\setminus\Lambda_m^r)+t,\omega)}\bigr)\notag\\
    &\le b(\Lambda_m^r)+(\K_f+D)\setsize{\partial^r(\Lambda_m)}
      \text.\label{eq:bdest4}
  \end{align}
  It remains to combine~\eqref{eq:triangle}
  with the bounds~\eqref{eq:bdest1}, \eqref{eq:bdest2}, \eqref{eq:bdest3}
  and~\eqref{eq:bdest4} to obtain~\eqref{eq:est1:empmeasure}.
  \par
  Let us turn to~\eqref{eq:limit:empmeasure}.
  As required, we first perform the limit $n\to\infty$.
  In~\eqref{eq:est1:empmeasure},
  the bounding terms affected by this limit vanish,
  due to \cref{extensive} and the fact that~$\Z^d$ is amenable:
  \begin{equation*}
    \lim_{n\to\infty}\biggl(\frac{b(\Lambda_{\floor{n/m}m})}
                                {\setsize{\Lambda_{\floor{n/m}m}}}
    +\frac{(2\K_f+D)\setsize{\partial^m(\Lambda_{n}^m)}}
          {\setsize{\Lambda_n^m}}\biggr)
      =0\text.
  \end{equation*}
  Secondly, we let $m\to\infty$.
  Since
  $b(\Lambda_m^r)/\setsize{\Lambda_m}\le b(\Lambda_m^r)/\setsize{\Lambda_m^r}$
  for $m>2r$, this takes care of the remaining terms of the upper bound in~%
  \eqref{eq:est1:empmeasure}.
  \begin{equation*}
    \lim_{m\to\infty}\frac{b(\Lambda_m)+b(\Lambda_m^r)
            +(\K_f+D)\setsize{\partial^r(\Lambda_m)}}{\setsize{\Lambda_m}}
      =0\text.
  \end{equation*}
  Thus,~\eqref{eq:limit:empmeasure} follows.
\end{proof}

\begin{Remark}
  Let us emphasize that the statement of the lemma is not an
  ``almost sure''-statement, but rather holds for all $\omega\in\Omega$.
\end{Remark}

\section{Application of Multivariate Glivenko--Cantelli Theory}\label{GC}

We briefly restate multivariate Glivenko--Cantelli results in \cref{wright:LDP}
and apply this result to our setting in \cref{thm:mainGK}.
To do so, we need some notions concerning monotonicity in~$\RL$.
\begin{Definition}\label{monodef}
  Let $\di\in\N$.
  \begin{compactitem}
    \item Let $s\in\{-1,1\}^\di$.
      The closed \emph{cone} $\cone_s$ with \emph{sign vector}~$s$ is the set
      \begin{equation*}
        \cone_s:=\{x=(x_j)_{j=1,\dotsc,\di}\in\RL\mid
          \forall j\in\{1,\dotsc,\di\}\colon x_js_j\ge0\}\text.
      \end{equation*}
      The closed cone with sign vector~$s$ and \emph{apex} $x\in\RL$
       is $\cone_s(x):=x+\cone_s$.
    \item A function $f\from\RL\to\R$ is \emph{monotone},
      if it is monotone in each coordinate,
      i.\,e.\ there exists $s\in\{-1,1\}^\di$
      such that, for all $x,y\in\RL$,
      \begin{equation*}
        y\in\cone_s(x)\implies
        f(y)\ge f(x)\text.
      \end{equation*}
    \item A set $\graph\subseteq\RL$ is a \emph{monotone graph},
      if there exists a sign vector $s\in\{-1,1\}^\di$
      such that, for all $x\in\graph$,
      \begin{equation*}
        \graph\isect\cone_s(x)\subseteq\bdry\cone_s(x)\text,
      \end{equation*}
      where $\bdry C$ denotes the boundary of~$C$ in $\RL$.
    \item A set $\graph\subseteq\RL$ is a \emph{strictly monotone graph},
      if there exists a sign vector $s\in\{-1,1\}^\di$
      such that, for all $x\in\graph$,
      \begin{equation*}
        \graph\isect\cone_s(x)=\{x\}\text.
      \end{equation*}
  \end{compactitem}
\end{Definition}
\begin{Remark}~
  \begin{itemize}
  \item This notion of monotonicity is compatible with~\labelcref{monotone}
    in \cref{def:admissible}.
  \item We want to emphasize that in the above definition a second meaning
    of the notion of a \emph{graph} was used.
    In \cref{sec:notation} a graph was introduced as a pair consisting of a set
    of vertices and a set of edges.
    In contrast to that, \cref{monodef} states that a \emph{monotone graph}
    is a certain subset of~$\RL$.
    In order to distinguish both meanings we will always insert the term
    \emph{monotone} when speaking about subsets of~$\RL$.
  \end{itemize}
\end{Remark}

The following theorem is proven in \cite[Theorem~1 and~2]{Wright1981}.
Recall that the continuous part~$\muc$ of a measure~$\mu$ on~$\RL$
is given by $\muc(A):=\mu(A)-\sum_{x\in A}\mu\{x\}$
for all Borel sets $A\in\Borel(\RL)$.
\begin{Theorem}[DeHardt, Wright]\label{wright:LDP}
  Let $(\Omega,\cA,\P)$ be a probability space and $X_t\from\Omega\to\RL$,
  $t\in\N$, independent and identically distributed random variables
  with distribution~$\mu$, i.\,e., $\mu(A):=\P(X_1\in A)$
  for all $A\in\Borel(\RL)$.
  For each $J\subseteq\{1,\dotsc,\di\}$, $J\ne\emptyset$,
  let $\mu^J$ be the distribution of the vector~$(X_1^j)_{j\in J}$
  consisting of the coordinates $j\in J$
  of the vector~$X_1=(X_1^j)_{j\in\{1,\dotsc,\di\}}$,
  i.\,e.\ a marginal of~$\mu$.
  We denote by
  \begin{equation*}
    L_n\from\Omega\times\Borel(\RL)\to\R\textq,
    L_n^{(\omega)}(A):=\frac1n\sum_{t=1}^n\dirac{X_t(\omega)}
  \end{equation*}
  the empirical distribution corresponding to the sample
  $(X_1,\dotsc,X_n)$, $n\in\N$.
  Fix further $\M>0$ and let
  \begin{equation*}
    \cM:=\{f\from\RL\to\R\mid
      \text{$f$~monotone and\/ $\sup\abs{f(\RL)}\le\M$}\}\text.
  \end{equation*}
  Then the following assertions are equivalent:
  \begin{compactenum}[(i)]
    \item\label[statement]{graph}
      For all $J\subseteq\{1,\dotsc,\di\}$, $J\ne\emptyset$,
      the continuous part~$\muc^J$ of the marginal~$\mu^J$ of~$\mu$
      vanishes on every strictly monotone graph~$\graph\subseteq\R^J$:
      \begin{equation*}
        \muc^J(\graph)=0\text.
      \end{equation*}
    \item\label[statement]{heart}
      There exists a set $\Omega'\in\cA$ of full probability $\P(\Omega')=1$
      such that, for all $\omega\in\Omega'$,
      \begin{equation*}
        \sup_{f\in\cM}\abs{\spr f{L_n^{(\omega)}-\mu}}\xto{n\to\infty}0\text.
      \end{equation*}
    \item\label[statement]{wright}
      For all $\epsilon>0$, there are $a=a(\epsilon)>0$ and $b=b(\epsilon)>0$
      such that for all $n\in\N$ there exists an $\Omega_{\epsilon,n}\in\cA$,
      such that for all $\omega\in\Omega_{\epsilon,n}$, we have
      \begin{equation*}
        \sup_{f\in\cM}\abs{\spr f{L_n^{(\omega)}-\mu}}\le\ve
        \qtextq{and\/}
        \P(\Omega_{\epsilon,n})
          \ge1-b\exp(-an)\text.
      \end{equation*}
  \end{compactenum}
\end{Theorem}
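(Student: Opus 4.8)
The plan is to establish the cycle of implications $\labelcref{graph}\Rightarrow\labelcref{wright}\Rightarrow\labelcref{heart}\Rightarrow\labelcref{graph}$, of which the two middle links are soft and the first carries the analytic content. The link $\labelcref{wright}\Rightarrow\labelcref{heart}$ is a Borel--Cantelli argument: pick $\ve_j\dnto0$; for each $j$ the sets $\Omega_{\ve_j,n}$ supplied by \labelcref{wright} satisfy $\sum_n\bigl(1-\P(\Omega_{\ve_j,n})\bigr)\le\sum_nb_je^{-a_jn}<\infty$, so $\liminf_n\Omega_{\ve_j,n}$ has full probability and on it $\sup_{f\in\cM}\abs{\spr f{L_n^{(\omega)}-\mu}}\le\ve_j$ for all large $n$; the intersection $\bigcap_j\liminf_n\Omega_{\ve_j,n}$ is the set $\Omega'$ required by \labelcref{heart}. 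This keeps all measurability on the level of the explicitly constructed sets $\Omega_{\ve_j,n}$.

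For $\labelcref{heart}\Rightarrow\labelcref{graph}$ I argue by contraposition. Suppose $\muc^J(\graph)=c>0$ for some nonempty $J\subseteq\{1,\dotsc,\di\}$ and some strictly monotone graph $\graph\subseteq\R^J$ with sign vector $s$. Composing a monotone $g\from\R^J\to\R$ with the coordinate projection $\pi_J\from\R^\di\to\R^J$ gives a monotone function on $\R^\di$ with the same integrals against $\mu$, resp.\ against each $L_n^{(\omega)}$, as $g$ has against the marginal $\mu^J$, resp.\ against the corresponding marginal empirical measure; so it suffices to violate \labelcref{heart} for the marginal problem. Since $\graph$ is strictly monotone it is a $\cone_s$-antichain, whence for every $S\subseteq\graph$ the up-set $U_S:=\bigcup_{y\in S}\cone_s(y)$ satisfies $U_S\isect\graph=S$; thus $\min\{\M,1\}\,\one_{U_S}\in\cM$ and, for finite $S$, its integral against $\mu^J$ ignores $\graph$ (a finite set carries no $\muc^J$-mass, and the atoms are a separate, controllable finite list). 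Now run the classical obstruction for atomless measures: the restriction of $\mu^J$ to $\graph$ is nonatomic of mass $c$, so almost surely the $\approx cn$ sample points landing on $\graph$ are distinct; choosing $S$ to be exactly this random set, $\one_{U_S}$ collects $L_n$-mass $\ge L_n(\graph)\to c$, while $\mu^J(U_S)$ equals its off-graph part, which---after conditioning on the on-graph samples---is matched by the off-graph $L_n$-mass up to a fluctuation that vanishes in probability. Hence $\sup_{f\in\cM}\abs{\spr f{L_n-\mu}}\ge\min\{\M,1\}\,c/2$ with probability tending to $1$, contradicting \labelcref{heart}.

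The substance is $\labelcref{graph}\Rightarrow\labelcref{wright}$, and its engine is the assertion that \labelcref{graph} yields, for every $\ve>0$, a \emph{finite} $L^1(\mu)$-bracketing of $\cM$: finitely many pairs $g_1^l\le g_1^u,\dotsc,g_N^l\le g_N^u$ with $\spr{g_i^u-g_i^l}{\mu}<\ve$ such that every $f\in\cM$ lies between some $g_i^l$ and $g_i^u$. After reflecting coordinates (so ``monotone'' means coordinatewise nondecreasing), the superlevel sets $\{f\ge c\}$ of $f\in\cM$ are up-sets, so $f$ is encoded by its monotone family of staircase-bounded up-sets, and the brackets are obtained by snapping these staircases to a fixed finite grid. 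Such a grid exists precisely because of \labelcref{graph}, applied to $\mu$ and to all of its marginals: the only ``faces'' that could carry continuous mass and thereby obstruct the snapping are pieces of strictly monotone graphs of some marginal $\mu^J$, and \labelcref{graph} kills their $\muc^J$-mass, so the total mass of boxes and faces along any staircase is $<\ve$ once the grid is fine enough; the finitely many non-negligible atoms of $\mu$ and of its marginals are put on grid lines. Granting the bracketing, for $f$ in bracket $i$ one has $\spr{g_i^l}{L_n-\mu}-\ve\le\spr f{L_n-\mu}\le\spr{g_i^u}{L_n-\mu}+\ve$, hence $\sup_{f\in\cM}\abs{\spr f{L_n-\mu}}\le\max_{i\le N}\max\bigl\{\abs{\spr{g_i^l}{L_n-\mu}},\abs{\spr{g_i^u}{L_n-\mu}}\bigr\}+\ve$. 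Each $\spr g{L_n-\mu}=\tfrac1n\sum_{t=1}^n\bigl(g(X_t)-\spr g\mu\bigr)$ is an average of $n$ i.i.d.\ mean-zero variables of range $\le2\M$, so Hoeffding's inequality gives $\P(\abs{\spr g{L_n-\mu}}>\ve)\le2e^{-n\ve^2/2\M^2}$; a union bound over the $2N$ bracket functions shows that $\Omega_{\ve,n}:=\bigl\{\max_{i\le N}\max\{\abs{\spr{g_i^l}{L_n-\mu}},\abs{\spr{g_i^u}{L_n-\mu}}\}\le\ve\bigr\}$ is measurable, is contained in $\{\sup_{f\in\cM}\abs{\spr f{L_n-\mu}}\le2\ve\}$, and has $\P(\Omega_{\ve,n})\ge1-4Ne^{-n\ve^2/2\M^2}$ for \emph{all} $n$; relabelling $\ve$, this is \labelcref{wright} with $a(\ve)=\ve^2/8\M^2$ and $b(\ve)=4N(\ve/2)$.

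The main obstacle is the finite-bracketing claim. The class $\cM$ is neither finite-dimensional nor a VC class---in dimension $\di\ge2$ a strictly monotone graph is shattered by $\cM$---so its $\mu$-bracketing numbers are generically infinite, and forcing them to be finite is exactly what \labelcref{graph} (for \emph{all} marginals, since boundaries of up-sets in $\R^\di$ stratify into lower-dimensional monotone graphs) achieves. Turning the grid-snapping heuristic into a genuine finite list of brackets, with the correct bookkeeping of the countably many atoms and of the lower-dimensional faces that can legitimately carry continuous mass, is the technical heart of the argument of Wright and DeHardt; the remaining steps above are routine.
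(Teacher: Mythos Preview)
The paper does not supply its own proof of this theorem: immediately before the statement it says ``The following theorem is proven in \cite[Theorem~1 and~2]{Wright1981}'', and no argument is given. So there is nothing in the paper to compare your proposal against---the authors simply quote the result as a black box from the literature.

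That said, your sketch is a faithful outline of the DeHardt--Wright argument: the cycle $\labelcref{graph}\Rightarrow\labelcref{wright}\Rightarrow\labelcref{heart}\Rightarrow\labelcref{graph}$ with Borel--Cantelli for the second implication, a shattering obstruction for the third, and finite $L^1(\mu)$-bracketing plus Hoeffding for the first is exactly the structure of the original proofs. You correctly identify the finite-bracketing construction under \labelcref{graph} as the technical core and, appropriately, do not pretend to carry it out in full; your closing paragraph essentially defers that step back to Wright and DeHardt, which is honest but means your proposal is a proof \emph{sketch} rather than a self-contained proof. The contraposition $\labelcref{heart}\Rightarrow\labelcref{graph}$ is also somewhat telegraphic---the claim that the off-graph $\mu^J$-mass of $U_S$ is matched by off-graph empirical mass up to vanishing fluctuation needs a genuine argument (e.g.\ conditioning and a separate Glivenko--Cantelli step for the off-graph part), though the intuition is right. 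For the purposes of this paper, which only \emph{uses} the theorem, your level of detail already exceeds what the authors provide.
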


\begin{Remark}
  Note that if we knew that the set
  $\{\omega\in\Omega\mid\sup_{f\in\cM}\abs{\spr f{L_n^{(\omega)}-\mu}}\ge\ve\}$
  was measurable, we could rephrase~\labelcref{wright} as follows.
  For all $\epsilon>0$, the probabilities
  $\P(\sup_{f\in\cM}\abs{\spr f{L_n^{(\omega)}-\mu}}\ge\epsilon)$
  converge exponentially fast to zero as $n\to\infty$.
\end{Remark}

We provide a sufficient condition for~\labelcref{graph} in \cref{wright:LDP}
and apply the theorem to our setting.
The idea to use product measures in the context of Glivenko--Cantelli
type theorems appears already in \cite{Stute1976}.

\begin{Theorem}\label{strictlymonotone}
  Let~$\mu$ be a measure on~$\RL$
  which is absolutely continuous with respect to a product measure
  $\Tensor_{j=1}^\di\mu_j$ on~$\RL$, where~$\mu_j$, $j\in\{1,\dotsc,\di\}$
  are measures on~$\R$.
  Then, for each strictly monotone graph $\graph\subseteq\RL$
  we have $\muc(\graph)=0$, where $\muc$
  is the continuous part of~$\mu$.
  Moreover,~\labelcref{graph} from \cref{wright:LDP} is satisfied.
\end{Theorem}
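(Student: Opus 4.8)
The plan is to establish the first assertion and then deduce~\labelcref{graph} by passing to marginals. Throughout I would assume~$\mu$ finite and the~$\mu_j$ $\sigma$-finite (as in all applications, and as is needed for~$\muc$ to be a measure and for Fubini below), and tacitly take~$\graph$ to be Borel. The first step is a reduction to the case where the sign vector in \cref{monodef} is $s=(1,\dots,1)$: the coordinate reflections $x\mapsto(s_jx_j)_j$ are Borel isomorphisms of~$\R^k$ carrying product measures to product measures, preserving absolute continuity, commuting with the passage to the continuous part of a measure, and sending a strictly monotone graph with sign vector~$s$ to one with sign vector $(1,\dots,1)$. So from now on~$\graph$ is an \emph{antichain}: no two distinct points of~$\graph$ are comparable in the coordinatewise order on~$\R^k$.

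The proof then rests on two points. The first is Radon--Nikodym bookkeeping for the continuous part: writing $\rho:=\d\mu/\d(\bigotimes_j\mu_j)$ and $A_j:=\{a\in\R\mid\mu_j\{a\}>0\}$ for the (countable) set of atoms of~$\mu_j$, one has $\mu\{x\}=\rho(x)\prod_j\mu_j\{x_j\}$, so every atom of~$\mu$ lies in the countable set $\prod_{j=1}^kA_j$; this gives $\sum_{x\in\graph}\mu\{x\}=\mu\bigl(\graph\cap\prod_jA_j\bigr)$ and hence $\muc(\graph)=\mu\bigl(\graph\setminus\prod_jA_j\bigr)=\mu\bigl(\bigcup_{j=1}^kE_j\bigr)$ with $E_j:=\{x\in\graph\mid x_j\notin A_j\}$, reducing everything to $\mu(E_j)=0$ for each~$j$.

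The second point --- the only place the geometry of a strictly monotone graph enters --- is that an antichain is a function graph over \emph{every} coordinate projection. Here I would fix~$j$, note that the projection $\pi\colon\R^k\to\R^{k-1}$ forgetting the $j$-th coordinate is injective on~$\graph$ (two points agreeing off the $j$-th coordinate would be comparable), conclude that each $\pi$-fibre of~$E_j$ is empty or a single point whose $j$-th coordinate lies outside~$A_j$ --- hence is $\mu_j$-null --- and finish with Tonelli (splitting the $j$-th factor off $\bigotimes_i\mu_i$) together with absolute continuity: $(\bigotimes_i\mu_i)(E_j)=0$, so $\mu(E_j)=0$. This yields $\muc(\graph)=0$.

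For the last assertion I would check that, for nonempty $J\subseteq\{1,\dots,k\}$, the marginal~$\mu^J$ is absolutely continuous with respect to $\bigotimes_{j\in J}\mu_j$ --- a null set~$N$ of the latter pulls back under the coordinate projection to $N\times\R^{\{1,\dots,k\}\setminus J}$, which is $\bigotimes_{j=1}^k\mu_j$-null, hence $\mu$-null --- and then apply the part just proved in dimension~$\setsize{J}$ to obtain $\muc^J(\graph)=0$ for every strictly monotone graph $\graph\subseteq\R^J$, which is exactly~\labelcref{graph}. The only nonroutine step is the second point, namely recognising that a strictly monotone graph is a genuine function graph in every coordinate direction; the rest is the Radon--Nikodym/Fubini bookkeeping above and the opening reductions.
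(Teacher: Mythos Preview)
Your argument is correct and takes a genuinely different route from the paper's. The paper proceeds by induction on the dimension~$k$: it disintegrates~$\mu$ over the last coordinate, applies the induction hypothesis to the cross sections~$\graph_z\subseteq\R^{k-1}$ (which are again strictly monotone graphs), and then has to work to show that the resulting integral over~$z$ collapses to a sum over atoms; this last step requires an ad~hoc countability argument for the set $\cZ=\{z\in\R\mid\bar S\cap\graph_z\ne\emptyset\}$ via two explicitly constructed surjections. Your proof avoids the induction entirely. The single geometric observation you isolate --- that an antichain in~$\R^k$ injects under \emph{each} coordinate projection $\R^k\to\R^{k-1}$ --- lets you peel off one factor of the product measure with a single application of Tonelli and be done, for each~$j$ separately. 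This is more direct and makes the role of the strict-monotone hypothesis more transparent: it is used exactly once, to guarantee that all fibres are singletons. The paper's inductive scheme, by contrast, somewhat obscures where the strictness enters (it is used both in the base case and in the countability of~$\cZ$). Your treatment of the marginals in the final paragraph agrees with the paper's.
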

\begin{proof}
  Let~$\density$ be the density of~$\mu$
  with respect to~$\Tensor_{j=1}^\di\mu_j$.
  We define the set of atoms of~$\mu$,
  \begin{equation*}
    S:=\{x\in\RL\mid\mu\{x\}>0\}\textq{ , and}
    S_j:=\{x_j\in\R\mid\mu_j\{x_j\}>0\}
    \quad(j\in\{1,\dotsc,\di\})\text.
  \end{equation*}
  Then we have $S\subseteq S_1\times\dotsb\times S_\di$, and for each
  $x=(x_1,\dotsc, x_\di)\in S_1\times\dotsb\times S_\di$, we have
  \begin{equation}\label{eq:mufprodro}
    \mu\{x\}=\density(x)\prod\nolimits_{j=1}^\di\mu_j\{x_j\}\text.
  \end{equation}
  This implies in particular that for all
  $x\in S_1\times\dotsb\times S_\di\setminus S$, we have $\density(x)=0$.

  In order to prove $\muc(\graph)=0$ it is sufficient to show
    \begin{equation}\label{eq:muGammarho}
      \mu(\graph)
        =\sum_{x\in S\isect\graph}\mu\{x\}\text.
    \end{equation}
  We will prove this by induction over $\di$.
  If $\di=1$ then a strictly monotone graph is a singleton,
  i.\,e.\ $\graph=\{x\}$ for some $x\in\R$.
  Thus,~\eqref{eq:muGammarho} holds true.
  In the case $\di>1$ we assume that~\eqref{eq:muGammarho}
  holds for $\di-1$ and proceed by disintegration.
  Note that, for $z\in\R$, the cross section
  $\graph_z:=\{y\in\R^{\di-1}\mid(y,z)\in\graph\}$
  is itself a strictly monotone graph in~$\R^{\di-1}$.
  Using the cross section $\density_z\from\R^{\di-1}\to\R$,
  $\density_z(y):=\density(y,z)$, $z\in\R$,
  of the density, we define the cross section
  $\mu_z:=\density_z\Tensor_{j=1}^{\di-1}\mu_j$ of the measure~$\mu$.
  By Fubini's Theorem, the disintegration of~$\mu$ is
  \begin{equation*}
    \mu(\d(y,z))
      =\density_z(y)\Tensor_{j=1}^{\di-1}\mu_j(\d y_j)\tensor\mu_\di(\d z)
      \text.
  \end{equation*}
  By the induction hypothesis we obtain
  \begin{equation}\label{eq:muupsi}
    \begin{aligned}
      \mu(\graph)&
        =\int_\R\biggl(\int_{\R^{\di-1}}
                       \ifu{\graph_z}(y)\,\mu_z(\d y)\biggr)\mu_\di(\d z)\\&
        =\int_\R\mu_z(\graph_z)\,\mu_\di(\d z)
        =\int_\R\sum_{y\in\bar S\isect\graph_z}\mu_z\{y\}\,\mu_\di(\d z)\text,
    \end{aligned}
  \end{equation}
  where $\bar S:=S_1\times\dotsb\times S_{\di-1}$.
  The next aim is to show that the set
  $\cZ:=\{z\in\R\mid\bar S\isect\graph_z\ne\emptyset\}$ is countable.
  To this end, we will use that $\bar S$ is countable, define two mappings
  \begin{equation*}
    \phi\from\bar S \to(\bar S\times\R)\isect\graph
    \qtextq{and} \psi\from   (\bar S\times\R)\isect\graph\to\cZ
  \end{equation*}
  and show that they are surjective.
  We first define~$\phi$.
  Let $(y,z),(y,z')\in(\bar S\times\R)\isect\graph$ be given
  and assume without loss of generality that $z\le z'$.
  Let $s\in\{-1,1\}^\di$ be the sign vector of~$\graph$ from \cref{monodef},
  and, again without loss of generality, consider the case $s(\di)=1$.
  Then we have
  \begin{align*}
    \cone_s(y,z)\cap\graph=\{(y,z)\}
    \qtextq{and}
    \cone_s(y,z')\cap\graph=\{(y,z')\}\text.
  \end{align*}
  As $z\le z'$ and $s(\di)=1$, we have $\cone_s(y,z)\supseteq\cone_s(y,z')$,
  such that we obtain
  \begin{align*}
    \{(y,z)\} = \cone_s(y,z)\cap \graph \supseteq \cone_s(y,z')\cap \graph =\{(y,z')\}\text.
  \end{align*}
  This shows that if $y\in\bar S$ is such that there exists an element $z\in\R$
  with $(y,z)\in\graph$, then this $z$ is unique.
  Let $h\in (\bar S\times\R)\isect\graph$ be arbitrary but fixed and set
  \begin{equation*}
   \phi\from\bar S \to(\bar S\times\R)\isect\graph\textq,
   \phi(y)
     :=\begin{cases}
         (y,z) &\text{if $(y,z)\in\graph$, and}\\
          h    &\text{if $(\{y\}\times\R)\isect\graph=\emptyset$.}
     \end{cases}
  \end{equation*}
  This~$\phi$ is well-defined and surjective.
  The mapping~$\psi$ is defined by
  \begin{equation*}
    \psi\from(\bar S\times\R)\isect\graph\to\cZ\textq,\psi(y,z):=z\text.
  \end{equation*}
  To check that~$\psi$ is surjective let $z\in\cZ$ be given.
  Then there exists $y\in \bar S\cap \graph_z$.
  Thus, by definition of~$\graph_z$ we have $(y,z)\in\graph$
  and $(y,z)\in\bar S\times\R$.
  This shows that $(y,z)$ is in the domain of $\psi$ and $\psi(y,z)=z$.

  The surjectivity of~$\phi$ and~$\psi$ and the fact that~$\bar S$ is countable
  show that~$\cZ$ is countable.
  Therefore the last integral in~\eqref{eq:muupsi} is actually a sum:
  \begin{align*}
    \mu(\graph)&
      =\int_\R\sum_{y\in\bar S\isect\graph_z}\mu_z\{y\}\,\mu_\di(\d z)
      =\sum_{z\in S_\di}\sum_{y\in\bar S\isect\graph_z}\mu_z\{y\}\,\mu_\di\{z\}
      =\sum_{x\in S\isect\graph}\mu\{x\}\text.
  \end{align*}
  Here, the last equality follows from~\eqref{eq:mufprodro},
  $\bigcup_{z\in S_\di}(\bar S\cap\graph_z)\times\{z\}\supseteq S\cap\graph$,
  and the fact that~$\density$ vanishes on
  \begin{equation*}
    \bigcup_{z\in S_\di}(\bar S\cap\graph_z)\times\{z\}
      \setminus(S\cap\graph)\subseteq S_1\times\dotsb\times S_k\setminus S
    \text.
  \end{equation*}
  This finishes the induction and we obtained~\eqref{eq:muGammarho}
  and $\muc(\graph)=0$.
  \par
  Let $J\subseteq\{1,\dotsc,k\}$ such that $J\ne\emptyset$
  and $J^c:=\{1,\dotsc,k\}\setminus J\ne\emptyset$.
  Define $\density^J\from\R^J\to\R$ via
  \begin{equation*}
    \density^J(x^J)
     :=\int_{\R^{J^c}}\density(x)\,\d
       \Tensor_{j\in J^c}\mu_j(x^{J^c})\text,
  \end{equation*}
  where $x=(x^J,x^{J^c})\in\R^J\times\R^{J^c}$.
  The function $\density^J$ is the density of the marginal
  $\mu^J$ of~$\mu$ with respect to $\Tensor_{j\in J}\mu_j$,
  since by Fubini for all $A\in\Borel(\R^J)$
  \begin{align*}
    \mu^J(A)
     &=\int_{\RL}\ifu A(x^J)\density(x)\,\d
        \Tensor_{j=1}^\di\mu_j(x)
      =\int_A\density^J(x^J)\,\d\Tensor_{j\in J}\mu_j(x^J)\text.
  \end{align*}
  Thus, the above calculation applies for all marginals of~$\mu$, too.
  This shows~\labelcref{graph} from \cref{wright:LDP}.
\end{proof}

Now we approximate the empirical measure~$L_{m,n}^{r,\omega}$
using the measure~$\P_m^r$, see step~(2) in \cref{sec:wunsch}.
The connection to \cref{defP} is established by \cref{strictlymonotone}.
As announced before we apply the multivariate Glivenko--Cantelli
\cref{wright:LDP} for the proof of \cref{thm:mainGK}.
\begin{Theorem}\label{thm:mainGK}
  Let $\Lambda_n:=\Ico0n\isect\Z^d$, $n\in\N$, a set $\cA\subseteq\R$,
  $\Omega:=\cA^{\Z^d}$, a probability space $(\Omega,\Borel(\Omega),\P)$
  such that~$\P$ satisfies \labelcref{M1,M2,,M3}
  and an admissible function~$f$ be given.
  Besides this let for $m,n\in\N$ and $\omega\in\Omega$
  the empirical measure~$L_{m,n}^{r,\omega}$
  be given as in~\eqref{def:empmeasure:new}
  and let $\P_m^r:=\P_{\Lambda_m^r}$ be the marginal measure,
  where~$r$ is the constant given by~\labelcref{M3}.
  Then there exists a set $\tilde\Omega\in\Borel(\Omega)$ of full measure,
  such that for all $\omega\in\tilde\Omega$ and all $m\in\N$:
  \begin{align}\label{eq:claim:mainGK}
    \lim_{n\to\infty}\norm{\spr{f_m^r}{L_{m,n}^{r,\omega}-\P_m^r}}
      =0\text.
  \end{align}
  Furthermore, for $\K,D,D'>0$ and $r'\in\N$,
  we have for all $\omega\in\tilde\Omega$ and $m\in\N$
  \begin{align}\label{eq:claim:sup}
    \lim_{n\to\infty}\sup_{f\in\cU_{\K,D,D',r'}}
      \norm{\spr{f_m^r}{L_{m,n}^{r,\omega}-\P_m^r}}
      =0\text.
  \end{align}
  Additionally, for each $\epsilon>0$ there exist
  $a=a(\epsilon,m,\K)>0$ and $b=b(\epsilon,m,\K)$ such that for all $n\in\N$
  there is a measurable set $\Omega(\epsilon,n)$ with
  $\P(\Omega(\epsilon,n))\ge1-b\exp(-a \floor{n/m}^d)$ and
  \begin{equation}\label{eq:Wright-applied}
    \sup_{f\in\cU_{\K,D,D',r'}}\norm{\spr{f_m^r}{L_{m,n}^{r,\omega}-\P_m^r}}
      \le\epsilon\qtext{for all $\omega\in\Omega(\epsilon,n)$.}
  \end{equation}
\end{Theorem}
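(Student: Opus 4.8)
The plan is to realize $L_{m,n}^{r,\omega}$ as a genuine empirical measure of an i.\,i.\,d.\ sample and then invoke \cref{wright:LDP,strictlymonotone}. Fix $m\in\N$. If $\Lambda_m^r=\emptyset$ all three assertions are trivial, so assume $\di:=\setsize{\Lambda_m^r}\ge1$ and identify $\Omega_{\Lambda_m^r}=\cA^{\Lambda_m^r}$ with a subset of $\R^\di$, so that $\P_m^r:=\P_{\Lambda_m^r}$ becomes a measure on $\R^\di$. First I would record the geometric fact that for distinct $t,t'\in m\Z^d$ the translates $\Lambda_m^r+t$ and $\Lambda_m^r+t'$ are $\fd$-separated by more than $r$: the cubes $\Lambda_m+t$, $t\in m\Z^d$, partition $\Z^d$, so $\Lambda_m^r+t'\subseteq\Lambda_m+t'\subseteq\Z^d\setminus(\Lambda_m+t)$, whereas $\fd\bigl(\Lambda_m^r+t,\Z^d\setminus(\Lambda_m+t)\bigr)>r$ by~\eqref{eq:def:Lambda^r}. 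Hence \labelcref{M3} applied to the sets $\{\Lambda_m^r+t\mid t\in T_{m,n}\}$, together with \labelcref{M2}, shows that the random vectors $X_t:=(\tau_t\omega)_{\Lambda_m^r}$, $t\in T_{m,n}$, are independent, and \labelcref{M1} shows that they are identically distributed with law $\P_m^r$. Since the $T_{m,n}$ are nested with $\bigcup_n T_{m,n}=(m\N_0)^d$, one may enumerate $(m\N_0)^d$ so that every $T_{m,n}$ is an initial segment; then $L_{m,n}^{r,\omega}$ is precisely the empirical measure of the first $\setsize{T_{m,n}}=\floor{n/m}^d$ terms of an i.\,i.\,d.\ sequence with law $\P_m^r$, and $\floor{n/m}^d\to\infty$ as $n\to\infty$.

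Next I would check the hypothesis of \cref{wright:LDP}: by \labelcref{M2} the law $\P_m^r$ is absolutely continuous with respect to the product measure $\Tensor_{z\in\Lambda_m^r}\mu_z$ on $\R^\di$, so \cref{strictlymonotone} supplies condition~\labelcref{graph} of \cref{wright:LDP} for $\mu=\P_m^r$. The remaining and crucial step is to dominate the $\B$-valued quantity by a scalar Wright supremum over a single class. For each $E\in\R$ the function $g_E\from\cA^{\Lambda_m^r}\to\R$, $g_E(\eta):=f_m^r(\eta)(E)$, is monotone in each coordinate, with respect to the restriction to $\Lambda_m^r$ of the sign vector $s$ from~\labelcref{monotone} (cf.\ the remark following \cref{monodef}), and by~\eqref{def:Cf} satisfies $\abs{g_E(\eta)}\le\norm{f_m^r(\eta)}\le\K_f\setsize{\Lambda_m^r}\le\K_f m^d$. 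Choosing any monotone extension $\tilde g_E\from\R^\di\to\R$ of $g_E$ with $\sup\abs{\tilde g_E(\R^\di)}\le\K_f m^d$ (such extensions exist), $\tilde g_E$ lies in the class $\cM$ of \cref{wright:LDP} with $\M:=\K_f m^d$, and since $\P_m^r$ and $L_{m,n}^{r,\omega}$ are carried by $\cA^{\Lambda_m^r}$ we have $\spr{g_E}{L_{m,n}^{r,\omega}-\P_m^r}=\spr{\tilde g_E}{L_{m,n}^{r,\omega}-\P_m^r}$; hence
\begin{equation*}
  \bignorm{\spr{f_m^r}{L_{m,n}^{r,\omega}-\P_m^r}}
    =\sup_{E\in\R}\bigabs{\spr{g_E}{L_{m,n}^{r,\omega}-\P_m^r}}
    \le\sup_{g\in\cM}\bigabs{\spr{g}{L_{m,n}^{r,\omega}-\P_m^r}}\text.
\end{equation*}
The key observation is that for $f\in\cU_{\K,D,D',r'}$ one has $\K_f\le\K$, so the same bound holds with the \emph{$f$-independent} class $\cM$ of functions on $\R^\di$ bounded by $\M:=\K m^d$; thus $\sup_{f\in\cU_{\K,D,D',r'}}\bignorm{\spr{f_m^r}{L_{m,n}^{r,\omega}-\P_m^r}}$ is bounded by the single Wright supremum attached to this $\cM$.

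Finally I would read off the three assertions. Applying~\labelcref{heart} of \cref{wright:LDP} to $\cM$ (with $\M=\K_f m^d$ for~\eqref{eq:claim:mainGK} and $\M=\K m^d$ for~\eqref{eq:claim:sup}) gives, for each $m$, a set of full probability on which the Wright supremum converges to $0$ along the sample sizes $\floor{n/m}^d\to\infty$; intersecting these sets over $m\in\N$ produces one $\tilde\Omega\in\Borel(\Omega)$ of full measure on which both~\eqref{eq:claim:mainGK} and~\eqref{eq:claim:sup} hold. Applying~\labelcref{wright} of \cref{wright:LDP} to $\cM$ with $\M=\K m^d$ and sample size $\floor{n/m}^d$ gives, for each $\epsilon>0$, constants $a=a(\epsilon,m,\K)>0$ and $b=b(\epsilon,m,\K)$ and measurable sets $\Omega(\epsilon,n):=\Omega_{\epsilon,\floor{n/m}^d}\subseteq\Omega$ with $\P(\Omega(\epsilon,n))\ge1-b\exp(-a\floor{n/m}^d)$ on which the Wright supremum, and hence $\sup_{f\in\cU_{\K,D,D',r'}}\bignorm{\spr{f_m^r}{L_{m,n}^{r,\omega}-\P_m^r}}$, is at most $\epsilon$; this is~\eqref{eq:Wright-applied}. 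The step that deserves the most care is the first paragraph, namely the identification of $L_{m,n}^{r,\omega}$ with the empirical measure of an honest i.\,i.\,d.\ array, which hinges on the precise geometric separation of the $m$-translates of $\Lambda_m^r$ together with \labelcref{M1}, \labelcref{M2} and~\labelcref{M3}; the reduction to the fixed scalar class $\cM$ is the other essential point, after which everything is a direct appeal to \cref{wright:LDP,strictlymonotone}.
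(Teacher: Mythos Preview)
Your proposal is correct and follows essentially the same route as the paper's proof: identify the $X_t=(\tau_t\omega)_{\Lambda_m^r}$ as an i.i.d.\ array via \labelcref{M1,M2,,M3}, verify~\labelcref{graph} of \cref{wright:LDP} through \cref{strictlymonotone}, bound $\norm{\spr{f_m^r}{L_{m,n}^{r,\omega}-\P_m^r}}$ by the Wright supremum over the class of monotone functions bounded by $\K m^d$ (the paper uses the tighter $\setsize{\Lambda_m^r}\K_f$, but this is immaterial), and then read off~\labelcref{heart} and~\labelcref{wright}. Your explicit verification that distinct $m$-translates of $\Lambda_m^r$ are $\fd$-separated by more than~$r$, and your emphasis that the class~$\cM$ is $f$-independent when $\K_f\le\K$, are good clarifications that the paper leaves implicit.
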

\begin{proof}
  Let $m\in\N$ be given.
  We set $\di:=\setsize{\Lambda_m^r}$
  and embed $\Omega_{\Lambda_m^r}\subseteq\RL$.
  Fix an admissible function~$f$.
  For each $E\in\R$,
  there exists a monotone and bounded function $g_{m,E}^r\from\RL\to\R$
  which extends $f_m^r(\argmt)(E)\from\Omega_{\Lambda_m^r}\to\R$, i.\,e.\ %
  $
    f_m^r(\omega)(E)=g_{m,E}^r(\omega)
  $
  for all $\omega\in\Omega_{\Lambda_m^r}$.
  In fact, $g_{m,E}^r$ can be bounded by $k\K_f$, where~$\K_f$
  is the constant introduced in~\eqref{def:Cf}.
  Thus, the set $\cM_f:=\{g_{m,E}^r\mid E\in\R\}$
  is monotone and bounded by~$k\K_f$, see \cref{rem:admissible}.


In order to apply the Glivenko--Cantelli \cref{wright:LDP},
we enumerate $\Ico0\infty^d\isect m\Z^d$ with a sequence $(t_\ell)_{\ell\in\N}$
such that, for all $q\in\N$,
\begin{equation*}
  \{t_1,\dotsc,t_{q^d}\}
    =\Ico0{mq}^d\isect m\Z^d\text.
\end{equation*}
Consider further for each $\ell\in\N$ the mapping
\[
  X_\ell^r\from\Omega\to\Omega_{\Lambda_m^r}\subseteq\RL,\qquad
  X_\ell^r(\omega):=\Pi_{\Lambda_m^r}(\tau_{t_\ell}^{-1}\omega)\text.
\]
By~\labelcref{M3} the random variables $X_\ell^r$, $\ell\in\N$
are independent with respect to the measure~$\P$ on $(\Omega,\Borel(\Omega))$.
Moreover, applying~\labelcref{M1} shows that $X_\ell^r$, $\ell\in\N$,
are identically distributed.
By definition, the empirical measure of $X_\ell^r$, $\ell\in\{1,\dotsc,\abs{T_{m,n}}\}$,
where $\abs{T_{m,n}}=\lfloor n/m\rfloor^d$,
is exactly the empirical measure $L_{m,n}^{r,\omega}$ given in~\eqref{def:empmeasure:new}.
According to~\labelcref{M2}, the measure~$\P_m^r$ is absolutely continuous
with respect to a product measure on $\Omega_{\Lambda_m^r}$.
We trivially extend $\P_m^r$ and $L_{m,n}^{r,\omega}$ to measures on~$\RL$
(and use the same names for the extensions).
This allows to apply \cref{strictlymonotone},
which gives~\labelcref{graph} of \cref{wright:LDP}.
Thus, the Glivenko--Cantelli theorem implies that
(for the $m\in\N$ chosen above) there is a set $\Omega_m\in\Borel(\Omega)$
of probability one such that for each $\omega\in\Omega_m$ we have
\begin{equation*}
  \bignorm{\spr{f_m^r}{L_{m,n}^{r,\omega}-\P_m^r}}
    =\sup_{g\in\cM_f}\bigabs{\spr g{L_{m,n}^{r,\omega}-\P_m^r}}
    \xto{n\to\infty}0\text,
\end{equation*}
since the supremum is bounded by the supremum in~\labelcref{heart}
from \cref{wright:LDP}.
By the same token,
\begin{equation*}
  \sup_{f\in\cU_{\K,D,D',r'}}\bignorm{\spr{f_m^r}{L_{m,n}^{r,\omega}-\P_m^r}}
    =\sup_{f\in\cU_{\K,D,D',r'}}\sup_{g\in\cM_f}
      \bigabs{\spr g{L_{m,n}^{r,\omega}-\P_m^r}}
    \xto{n\to\infty}0\text.
\end{equation*}
In the light of that, the claimed convergences in~\eqref{eq:claim:mainGK}
and~\eqref{eq:claim:sup} hold independently from~$m$
for all $\omega\in\tilde\Omega:=\bigcap_{m\in\N}\Omega_m$.
To obtain~\eqref{eq:Wright-applied} we apply \cref{wright:LDP},
\labelcref{wright}.
\end{proof}

\section{Almost additivity and limits, Proof of \Cref{thm:main}}\label{sec:Aadd}

Next we investigate the expression $\spr{f_m^r}{\P_m^r}$ for large~$m$.
This is the third and last step in our approximation scheme.
Thus, this step brings us in the position to prove our main results,
namely \cref{thm:main,cor:main}.

\begin{Lemma}\label{lemma3approx}
  Let $\cA\subseteq\R$, $\Omega:=\cA^{\Z^d}$,
  a probability space $(\Omega,\Borel(\Omega),\P)$
  such that~$\P$ satisfies \labelcref{M1,M2,,M3},
  an admissible function~$f$ and the sequence~$(\Lambda_n)$
  with $\Lambda_n=(\Ico0n\isect\Z)^d$, $n\in\N$ be given.
  Besides this, let~$r$ be the constant from~\labelcref{M3}
  and let for $m\in\N$ the marginal measure $\P_m^r:=\P_{\Lambda_m^r}$
  and the function~$f_m^r$ be given as in~\eqref{eq:def:f_n}.
  Then there exists a function $f^*\in\B$ with
  \begin{align}\label{eq:claim:lemma3approx}
    \lim_{m\to\infty}\biggnorm{\frac{\spr{f_m^r}{\P_m^r}}{m^d}-f^*}
      =0\text.
  \end{align}
  Furthermore, we have, with~$b$ and~$D$ from \cref{def:admissible}
  and~$\K_f$ from \cref{rem:admissible},
  for all $m\in\N$
  \begin{equation*}
    \biggnorm{\frac{\spr{f_m^r}{\P_m^r}}{m^d}-f^*}
      \le\frac{b(\Lambda_m^r)}{m^d}
        +(\K_f+D)\frac{\setsize{\partial^r(\Lambda_m)}}{m^d}
      \text.
  \end{equation*}
\end{Lemma}
\begin{proof}
  Let us define $F\from\cF\to\B$ by setting for each $\Lambda\in\cF$:
  \begin{equation*}
    F(\Lambda):=\spr{f_\Lambda}{\P_\Lambda}
      =\int_{\Omega_\Lambda}f_\Lambda(\omega)\,\d\P_\Lambda(\omega)
      =\int_{\Omega}f(\Lambda,\omega)\,\d\P(\omega)\text.
  \end{equation*}
  With this notation, it is sufficient to show that
  $(F(\Lambda_m^r)/m^d)_{m\in\N}$ is a Cauchy sequence.
  \par
  First, we note that~$F$ is translation invariant,
  i.\,e.\ $F(\Lambda + t)=F(\Lambda)$.
  To see this, use~\labelcref{M1}
  and~\labelcref{transitive} of \cref{def:admissible}.
  Note also, that~$F$ is almost additive with the same~$b$ and~$D$ as~$f$,
  see~\labelcref{extensive} of the same definition.
  Furthermore, it follows from \cref{rem:admissible}
  that~$F$ is bounded in the following sense:
  For all $\Lambda\in\cF$, we have $F(\Lambda)\le\K_f\setsize{\Lambda}$
  with the same constant~$\K_f$ as in~\eqref{def:Cf}.
  \par
  Next, assume that two integers $m,M$ with $m\le M$ are given.
  As in~\eqref{def:Tmn}, set
  \begin{align*}
    T_{m,M}:=\{t\in(m\Z)^d\mid\Lambda_m+t
      \subseteq\Lambda_M\}\text.
  \end{align*}
  We are interested in an estimate of the difference
  \begin{align}
    \delta(m,M):=\biggnorm{\frac{F(\Lambda_M^r)}{M^d}
                          -\frac{F(\Lambda_m^r)}{m^d}}\text.
  \end{align}
  To study this we use the triangle inequality and get
  \begin{align}
    \delta(m,M)\le\frac{\alpha(m,M)}{M^d}+\beta(m,M)
  \end{align}
  with
  \begin{equation*}
    \alpha(m,M)
      :=\biggnorm{F(\Lambda_M^r)-\sum_{t\in T_{m,M}}F(\Lambda_m^r +t)}
      \text{, }
    \beta(m,M):=\biggnorm{\frac{F(\Lambda_m^r)}{m^d}-\sum_{t\in T_{m,M}}
                          \frac{F(\Lambda_m^r+t)}{M^d}}
      \text.
  \end{equation*}
  In order to estimate $\alpha(m,M)$, note that
  \begin{equation*}
    \Lambda_M^r
      =\dUnion_{t\in T_{m,M}}(\Lambda_m^r +t)\enspace\dunion\enspace
        \Bigl(\Lambda_M^r\isect\dUnion_{t\in T_{m,M}}
          ((\Lambda_m\isect\partial^r(\Lambda_m))+t)\Bigr)
        \enspace\dunion\enspace
        \Lambda_M^r\setminus(\Lambda_{\floor{M/m}m})\text.
  \end{equation*}
  This and \labelcref{extensive} of \cref{def:admissible} yield
  \begin{align*}&
    \alpha(m,M)\\
    &\le\sum_{t\in T_{m,M}}\Bigl(b(\Lambda_m^r)
      +b\bigl(\Lambda_M^r\isect((\Lambda_m\isect\partial^r(\Lambda_m))+t)\bigr)
      +\bignorm{F\bigl(\Lambda_M^r\isect
        ((\Lambda_m\isect\partial^r(\Lambda_m))+t)\bigr)}\Bigr)
      \\&\qquad
      +b(\Lambda_M^r\setminus\Lambda_{\floor{M/m}m})
      +\norm{F(\Lambda_M^r\setminus\Lambda_{\floor{M/m}m})}\\
    &\le\setsize{T_{m,M}}b(\Lambda_m^r)
      +(\K_f+D)\setsize{T_{m,M}}\setsize{\partial^r (\Lambda_m)}
      +(\K_f+D)\setsize{\Lambda_M^r\setminus\Lambda_{\floor{M/m}m}}
    \text.
  \end{align*}
  Here, we also used translation invariance of~$F$
  and \cref{bounded} of \cref{def:admissible}.
  Dividing this term by~$M^d$ and using $\setsize{T_{m,M}}m^d\le M^d$
  as well as~\eqref{eq:tilingconstruction} leads to
  \begin{align*}
    \frac{\alpha(m,M)}{M^d}
    \le\frac{b(\Lambda_m^r)}{m^d}
      +(\K_f+D)\frac{\setsize{\partial^r(\Lambda_m)}}{m^d}
      +(\K_f+D)\frac{\setsize{\partial^m(\Lambda_M)}}{M^d}\text.
  \end{align*}
  \par
  To estimate~$\beta(m,M)$ we apply again translation invariance of~$F$
  and obtain
  \begin{align*}
    \beta(m,M)
      =\biggnorm{\frac{F(\Lambda_m^r)}{m^d}
                -\frac{\setsize{T_{m,M}}F(\Lambda_m^r)}{M^d}}
      =\biggl(\frac1{m^d}-\frac{\floor{M/m}^d}{M^d}\biggr)
        \norm{F(\Lambda_m^r)}\text.
  \end{align*}
  \par
  Using the properties of the boundary term~$b$,
  the above bounds on~$\alpha(m,M)$ and~$\beta(m,M)$ yield
  \begin{align}\label{eq:limlim}
    \lim_{m\to\infty}\lim_{M\to\infty}\delta(m,M)
      =0\text.
  \end{align}
  This is equivalent to $(F(\Lambda_m^r)/m^d)_{m\in\N}$ being a Cauchy sequence.
  To see this in detail, choose $\epsilon>0$ arbitrarily.
  Then, by~\eqref{eq:limlim}, there exists $m_0\in\N$
  such that $\lim_{M\to\infty}\delta(m_0,M)\le\epsilon/4$.
  Therefore, we find $M_0\in\N$ satisfying $\delta(m_0,M)\le\epsilon/2$
  for all $M\ge M_0$.
  Now, let $j,\di\ge M_0$ be arbitrary.
  Then we obtain using the triangle inequality
  \begin{align*}
    \biggnorm{\frac{F(\Lambda_j^r)}{j^d}-\frac{F(\Lambda_\di^r)}{\di^d}}
    &\le\biggnorm{\frac{F(\Lambda_j^r)}{j^d}-\frac{F(\Lambda_{m_0}^r)}{m_0^d}}
    +\biggnorm{\frac{F(\Lambda_{m_0}^r)}{m_0^d}-\frac{F(\Lambda_\di^r)}{\di^d}}\\
    &=\delta(m_0,j)+\delta(m_0,\di)\le\epsilon\text.
  \end{align*}
  This shows that $(F(\Lambda_m^r)/\setsize{\Lambda_m})_{m\in\N}$
  is a Cauchy sequence and hence convergent in the Banach space~$\B$.
  \par
  Now, that we know that the limit~$f^*$ exists,
  we can study the speed of convergence.
  \begin{multline*}
    \biggnorm{\frac{\spr{f_m^r}{\P_m^r}}{m^d}-f^*}
    =\lim_{M\to\infty}\biggnorm{\frac{\spr{f_m^r}{\P_m^r}}{m^d}
                               -\frac{\spr{f_M^r}{\P_M^r}}{M^d}}
    =\lim_{M\to\infty}\delta(m,M)\\
    \le\lim_{M\to\infty}\Bigl(\frac{\alpha(m,M)}{M^d}+\beta(m,M)\Bigr)
    \le\frac{b(\Lambda_m^r)}{m^d}
      +(\K_f+D)\frac{\setsize{\partial^r(\Lambda_m)}}{m^d}
      \text.\qedhere
  \end{multline*}
\end{proof}

Now we are in the position to prove the main theorem of this paper.
\begin{proof}[Proof of \cref{thm:main,cor:main}]
  The proof is basically a combination of
  \cref{la:empmeasure,thm:mainGK,lemma3approx}.
  We choose $\tilde \Omega$ as in \cref{thm:mainGK},
  $r$ as the constant from~\labelcref{M3}
  and~$f^*\in\B$ according to \cref{lemma3approx}.
  Then we have for arbitrary $m\in\N$ and $\omega\in \tilde \Omega$:
  \begin{multline*}
     \biggnorm{\frac{f(\Lambda_n,\omega)}{n^d}-f^*}
     \le\biggnorm{\frac{f(\Lambda_n,\omega)}{n^d}
    - \frac{\spr{f_m^r}{L_{m,n}^{r,\omega}}}{m^d}}\\
     \quad +\biggnorm{\frac{\spr{f_m^r}{L_{m,n}^{r,\omega}}}{m^d}
    - \frac{\spr{f_m^r}{\P_m^r}}{m^d}}
    +\biggnorm{\frac{\spr{f_m^r}{\P_m^r}}{m^d}-f^*}\text.
  \end{multline*}
  Each of the above mentioned results
  controls one of the error terms on the right hand side,
  which leads to
  \begin{equation*}
    \biggnorm{\frac{f(\Lambda_n,\omega)}{\abs{\Lambda_n}}-f^*}
      \le G(m,n)
        +m^{-d}\bignorm{\spr{f_m^r}{L_{m,n}^{r,\omega}}-\spr{f_m^r}{\P_m^r}}
  \end{equation*}
  with
  \begin{multline*}
    G(m,n):=
      \frac{b(\Lambda_{\floor{n/m}m})}
           {\setsize{\Lambda_{\floor{n/m}m}}}
     +\frac{(2\K+D)\setsize{\partial^m(\Lambda_{n}^m)}}
           {\setsize{\Lambda_n^m}}
     +\frac{2b(\Lambda_m^r)+b(\Lambda_m)+2(\K+D)\setsize{\partial^r(\Lambda_m)}}
           {\setsize{\Lambda_m}}
    \text.
  \end{multline*}
  Taking first the limit $n\to\infty$ and afterwards the limit $m\to\infty$
  on both sides proves \cref{thm:main}.
  \par
  To establish \cref{cor:main}, we use the additional hypotheses on the boundary term and estimate~$G(m,n)$.
  First we note for $n\ge2r$
  \begin{equation*}
    \setsize{\partial^r(\Lambda_n)}
      =(n+2r)^d-(n-2r)^d
      =\sum_{j=0}^d\binom dj(1-(-1)^\di)(2r)^jn^{d-j}
      \le2^{2d+1}r^dn^{d-1}\text.
  \end{equation*}
  Therefore,
  \begin{equation*}
    \frac{b(\Lambda_n)}{\setsize{\Lambda_n}}
      \le\frac{\setsize{\partial^{r'}(\Lambda_n)}D'}{\setsize{\Lambda_n}}
      \le\frac{2^{2d+1}{r'}^dD'}n
  \end{equation*}
  holds for all $n\ge2r'$.
  With $\Lambda_n^m=\Lambda_{n-2m}+(m,m,\dotsc,m)$,
  it is now straightforward to verify
  \begin{equation*}
    G(m,n)
      \le2^{2d+1}\Bigl(\frac{(2\K+D)m^d+D'{r'}^d}{n-2m}
        +\frac{2(\K+D)r^d+3D'{r'}^d}{m-2r}\Bigr)
      \text.
  \end{equation*}
The two claims about
$\sup_{f\in\cU_{\K,D,D',r'}} \bignorm{\spr{f_m^r}{L_{m,n}^{r,\omega}}-\spr{f_m^r}{\P_m^r}}  $ follow from  \cref{thm:mainGK}.
\end{proof}

\section{Eigenvalue counting functions for the Anderson model}\label{sec:Ecf}

In the following, we introduce the Anderson model on~$\Z^d$
or, more precisely, on the graph with nodes~$\Z^d$ and nearest neighbor bonds.
For the corresponding Schr\"odinger operators we show that
the associated eigenvalue counting functions almost surely converge uniformly.

The \emph{Laplace operator} $\Laplace\from\ell^2(\Z^d)\to\ell^2(\Z^d)$
is given by
\begin{equation*}
  (\Laplace\phi)(z)
    =\sum_{x:\fd(x,z)=1}\bigl(\phi(x)-\phi(z)\bigr)
    \qquad(z\in\Z^d)\text.
\end{equation*}
In order to define a random potential,
we introduce the corresponding probability space.
We fix the canonical space $\Omega:=\cA^{\Z^d}$,
where $\cA\subseteq\R$ is an arbitrary subset of $\R$.
As before we equip $\Omega$ with $\Borel(\Omega)$, the $\sigma$-algebra
on $\Omega$ generated by the cylinder sets.
Moreover, we chose a probability measure $\P\from\Borel(\Omega)\to\Icc01$
satisfying \labelcref{M1,M2,,M3}.
In particular, a product measure $\P=\prod_{z\in\Z}\mu$ is allowed,
where $\mu\from\Borel(\cA)\to\Icc01$ is a measure on $(\cA,\Borel(\cA))$.
An alternative way to specify such a product measure is to say
that the projections $\Omega\ni(\omega_x)_{x\in\Z}\to\omega_z$,
$z\in\Z$, are $\cA$-valued i.\,i.\,d.\ random variables.

The \emph{random potential} $V=(V_\omega)_{\omega\in\Omega}$
is now defined by setting for each $\omega=(\omega_z)_{z\in \Z^d}\in\Omega$:
\begin{equation}\label{eq:def:Vomega}
  V_\omega\from\ell^2(\Z^d)\to\ell^2(\Z^d)\textq,
  (V_\omega\phi)(z)=\omega_z\phi(z)
  \qquad(\phi\in\ell^2(\Z^d),z\in\Z^d)\text.
\end{equation}
Together, the Laplace operator and the random potential
form the \emph{random Schr\"odinger operator} $H=(H_\omega)_{\omega\in\Omega}$:
\begin{equation}\label{eq:def:H}
  H_\omega\from\ell^2(\Z^d)\to\ell^2(\Z^d)\textq,
  H_\omega:=-\Laplace+V_\omega\text.
\end{equation}
This operator is almost surely self-adjoint and ergodic by \labelcref{M1,M3}.
Thus, the spectrum~$\sigma(H_\omega)$ of~$H_\omega$
is a non-random subset of~$\R$, cf.~\cite{PasturF-92}.
In the following we are interested in the distribution of~$\sigma(H_\omega)$
on~$\R$.
The function which describes this distribution is called
\emph{integrated density of states}.

Let us define finite dimensional restrictions of~$H$.
To this end, consider for a given set $\Lambda\in\cF$ the projection
\begin{equation}\label{eq:def:proj}
  p_\Lambda\from\ell^2(\Z^d)\to\ell^2(\Lambda)\textq,
  (p_\Lambda\phi)(z):=\phi(z)\textq,
  (\phi\in\ell^2(\Z^d),z\in\Lambda)
\end{equation}
and the inclusion
\begin{equation}\label{eq:def:incl}
  i_\Lambda\from\ell^2(\Lambda)\to\ell^2(\Z^d)\textq,
  (i_\Lambda \phi)(z):=\begin{cases}
    \phi(z)&\text{if $z\in\Lambda$,}\\
    0      &\text{otherwise,}
  \end{cases}
  \quad(\phi\in\ell^2(\Lambda),z\in\Z^d)\text.
\end{equation}
Now,
for any $\omega\in\Omega$ and $\Lambda\in\cF$ we set
\begin{equation*}
  H_\omega^\Lambda\from\ell^2(\Lambda)\to\ell^2(\Lambda)\text, \qquad
  H_\omega^\Lambda:=p_\Lambda H_\omega i_\Lambda\text.
\end{equation*}
The corresponding eigenvalue counting function is given by
\begin{align}\label{eq:def:ecf}
  f(\Lambda,\omega)
    :=\Bigl(\R\ni x\mapsto
        \Tr\bigl(\ifu{\Ioc{-\infty}x}(H_\omega^{\Lambda})\bigr)\Bigr)
  \text.
\end{align}
Here, $\ifu{\Ioc{-\infty}x}(H_\omega^{\Lambda})$
denotes the spectral projection of $H_\omega^\Lambda$ on the interval $\Ioc{-\infty}x$.
Thus, $f(\Lambda,\omega)(x)$ equals the number of eigenvalues
(counted with multiplicities) of $H_\omega^\Lambda$ which do not exceed~$x$.

\begin{Lemma}\label{la:ecf}
  The eigenvalue counting function $f\from\cF\times \Omega\to\B$
  given by~\eqref{eq:def:ecf}
  is admissible in the sense of \cref{def:admissible}.
  It admits a proper boundary term, and possible constants for~$f$ are
  $D=D'=8$, $\K=9$ and $r'=1$.
\end{Lemma}
\begin{proof}
We verify \labelcref{transitive,local,extensive,monotone,bounded} of \cref{def:admissible}.
 \begin{itemize}
  \item[\labelcref{transitive}]
    Let $\Lambda\in\cF$ and $z\in \Z^d$ be given.
Using the definitions of the potential $V_\omega$ in~\eqref{eq:def:Vomega},
the translation $\tau_z$ in~\eqref{eq:def:tau},
the projection $p_\Lambda$ in~\eqref{eq:def:proj}
and the inclusion $i_\Lambda$ in~\eqref{eq:def:incl} we obtain
\begin{equation*}
 p_{\Lambda} V_{\tau_z\omega} i_\Lambda
   = p_{\Lambda +z} V_{\omega} i_{\Lambda +z}\text.
\end{equation*}
This generalizes to $ H_{\tau_z\omega}^\Lambda = H_{\omega}^{\Lambda +z}$
and hence implies for each $x\in \R$
\begin{align*}
 f(\Lambda ,\tau_z\omega) (x)
=\Tr\bigl(\ifu{\Ioc{-\infty}x}(H_{\tau_z\omega}^{\Lambda})\bigr)
=\Tr\bigl(\ifu{\Ioc{-\infty}x}(H_{\omega}^{\Lambda +z})\bigr)
= f(\Lambda +z ,\omega) (x)\text.
\end{align*}
\item[\labelcref{local}]
  Let $\Lambda\in\cF$ be given.
Obviously, for all $\omega,\omega'\in \Omega$
with $\Pi_\Lambda(\omega)=\Pi_\Lambda(\omega')$
we have $H_\omega^\Lambda=H_{\omega'}^\Lambda$.
Thus, we obtain $
 f(\Lambda,\omega)= f(\Lambda,\omega')$.
\item[\labelcref{extensive}]
In order to show almost additivity,
we make use of the following estimate,
which holds for $\Lambda'\subseteq \Lambda\in \cF$ and arbitrary $\omega\in\Omega$:
\begin{align}\label{eq:rankest}
 \norm{f(\Lambda,\omega)- f(\Lambda',\omega)}
   \le4\setsize{\Lambda\setminus \Lambda'}\text.
\end{align}
This bound can be verified using the min-max-principle, cf.~appendix of \cite{LenzSV-10}.
Now let $n\in\N$, disjoint sets $\Lambda_i\in\cF$, $i=1,\dotsc,n$
and $\Lambda:=\Union_{i=1}^n\Lambda_i\in\cF$ be given.
With triangle inequality and~\eqref{eq:rankest} we obtain for each $\omega\in\Omega$:
\begin{align*}
\hspace{-0.3cm}\biggnorm{f(\Lambda,\omega)- \sum_{i=1}^n f(\Lambda_i,\omega)}
&\le\biggnorm{f(\Lambda,\omega)- f\Bigl(\bigcup_{i=1}^n \Lambda_i^1,\omega\Bigr)}
+ \biggnorm{ f\Bigl(\bigcup_{i=1}^n \Lambda_i^1,\omega\Bigr)
- \sum_{i=1}^n f\bigl( \Lambda_i,\omega\bigr)}\\
&\hspace{-2.3cm}\le4\sum_{i=1}^n \setsize{\partial^1(\Lambda_i)}+
 \biggnorm{ f\Bigl(\bigcup_{i=1}^n \Lambda_i^1,\omega\Bigr
)- \sum_{i=1}^n f\bigl( \Lambda_i^1,\omega\bigr)}
+ \sum_{i=1}^n \bignorm{f(\Lambda_i^1,\omega)-f(\Lambda_i,\omega)}\\
&\hspace{-2.3cm}\le8\sum_{i=1}^n\setsize{\partial^1(\Lambda_i)}+
 \biggnorm{ f\Bigl(\bigcup_{i=1}^n \Lambda_i^1,\omega\Bigr)
- \sum_{i=1}^n f\bigl( \Lambda_i^1,\omega\bigr)}\text.
\end{align*}
In order to deal with the last difference,
we use that the operator in consideration has hopping range~$1$,
which gives for $\tilde \Lambda:=\bigcup_{i=1}^n \Lambda_i^1$:
\begin{equation*}
  H_\omega^{\tilde \Lambda}= \bigoplus_{i=1}^n H_\omega^{\Lambda_i^1}\text.
\end{equation*}
Thus, the eigenvalues of $H_\omega^{\tilde \Lambda}$
are exactly the union of the eigenvalues of the operators
$H_\omega^{\Lambda_i^1}$,
$i=1,\dotsc,n$.
This implies
\begin{equation*}
  f(\tilde\Lambda,\omega)
    =\sum_{i=1}^nf(\Lambda_i^1,\omega)
\end{equation*}
and hence
\begin{align*}
  \biggnorm{f(\Lambda,\omega)-\sum_{i=1}^nf(\Lambda_i,\omega)}
    \le8\sum_{i=1}^n\setsize{\partial^1(\Lambda_i)}\text.
\end{align*}
We set $b\colon\cF\to\Ico0\infty$
and $b(\Lambda):=8\setsize{\partial^1(\Lambda)}$.
Let $\Lambda\in\cF$ and $z\in\Z^d$.
Then obviously $b(\Lambda+z)=b(\Lambda)$ and $b(\Lambda)\le8\setsize{\Lambda}$,
and for any sequence of cubes $(\Lambda_n)$ with increasing side length,
we have $b(\Lambda_n)/\setsize{\Lambda_n}\to0$ as $n\to\infty$.
\item[\labelcref{monotone}]
For $\Lambda\in \cF$ and $\omega\in\Omega$ we denote the
$\setsize{\Lambda}$ eigenvalues of $H_\omega^\Lambda$
(counted with multiplicities)
by $E_1(H_\omega^\Lambda)\le\dotsb\le E_{\setsize{\Lambda}}(H_\omega^\Lambda)$.
Choose $n\in\{1,\dotsc,\setsize{\Lambda}\}$ and $\omega\le\omega'$,
i.\,e.\ for each $z\in \Z^d$ we have $\omega_z\le\omega_z'$.
By the  min-max-principle we get for the $n$-th eigenvalue:
\begin{align*}
  E_n(H_\omega^\Lambda)&
    =\min_{\substack{U\subseteq\R^\Lambda,\\\dim(U)=n}}
     \max_{\substack{\phi\in U,\\\norm\phi=1}}
     \spr{H_\omega^\Lambda\phi}\phi\\&
    =\min_{\substack{U\subseteq\R^\Lambda,\\\dim(U)=n}}
     \max_{\substack{\phi\in U,\\\norm\phi=1}}
     \bigl(\spr{H_{\omega'}^\Lambda\phi}\phi
          -\spr{(V_{\omega'}-V_\omega)\phi}\phi\bigr)
    \le E_n(H_{\omega'}^\Lambda)
     \text.
\end{align*}
Therefore, we have for each $x\in\R$ the inequality $
 f(\Lambda,\omega)(x)\ge f(\Lambda,\omega')(x)$.
\item[\labelcref{bounded}]
Let arbitrary $\omega\in \Omega$ be given.
Since the operator $H_\omega^{\lbrace0\rbrace}$
has exactly one eigenvalue, we have $\norm{f(\{0\},\omega)}=1$.
\qedhere
\end{itemize}
\end{proof}

Let us state the main result of this section.
\begin{Theorem}\label{perc:main}
  Let $\Lambda_n:=\Ico0n^d\isect\Z^d$.
  Moreover, let $\cA\subseteq \R$, $\Omega:=\cA^{\Z^d}$
  and $(\Omega,\Borel(\Omega),\P)$
  be a probability space satisfying \labelcref{M1,M2,M3}.
  Consider the random Schr\"odinger operator $H$ defined in~\eqref{eq:def:H}
  and the associated $f$ given in~\eqref{eq:def:ecf}.
  Then there exists a set $\tilde\Omega\in\Borel(\Omega)$ of full measure,
  such that for all $\omega\in\tilde\Omega$:
  \begin{align}
    \lim_{n\to\infty}
      \biggnorm{\frac{f(\Lambda_n,\omega)}{\setsize{\Lambda_n}}-f^*}
      =0\text,
  \end{align}
  where $f^*\in\B$ is given by
  \begin{align}\label{eq:pasturshubin}
    f^*(x)
      :=\E(\spr{\dirac0}{\ifu{\Ioc{-\infty}x}(H_\omega)\dirac0})\text.
  \end{align}
  Here, $\dirac0\in \ell^2(\Z^d)$ is given by $\dirac0(0)=1$
  and $\dirac0(x)=0$ for $x\ne0$.
  Moreover, $\ifu{\Ioc{-\infty}x}(H_\omega)$
  is the spectral projection of~$H_\omega$
  on the interval $\Ioc{-\infty}x$.
  The convergence is quantified by
  \begin{equation}\label{eq:errorestimate}
    \Bignorm{\frac{f(\Lambda_n,\omega)}{\setsize{\Lambda_n}}-f^*}
      \le2^{d+1}\Bigl(\frac{26m^d+8}{n-m}
        +\frac{34r^d+24}{m-r}\Bigr)
        +\sup_{f\in\cU_{\K,D,D',r'}}\frac{\bignorm{\spr{f_{\Lambda_m^r}}
        {L_{m,n}^{r,\omega}-\P_{\Lambda_m^r}}}}{\setsize{\Lambda_m}}
  \end{equation}
  for $n,m\in\N$, $m<n$.
\end{Theorem}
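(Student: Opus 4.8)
\emph{The plan} is to deduce \cref{perc:main} from \cref{thm:main,cor:main} applied to the eigenvalue counting function, and then to identify the abstract limit with the Pastur--Shubin expression~\eqref{eq:pasturshubin}. By \cref{la:ecf}, the function $f$ of~\eqref{eq:def:ecf} is admissible, possesses a proper boundary term, and admits the constants $\K=9$, $D=D'=8$, $r'=1$. Hence \cref{thm:main} applies directly and yields a set $\tilde\Omega\in\Borel(\Omega)$ of full measure together with a function $f^*\in\B$ such that $\bignorm{f(\Lambda_n,\omega)/\setsize{\Lambda_n}-f^*}\to0$ for every $\omega\in\tilde\Omega$.

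It then remains to identify $f^*$. Fix $x\in\R$. For every $\omega\in\tilde\Omega$ we have $\setsize{\Lambda_n}^{-1}f(\Lambda_n,\omega)(x)\to f^*(x)$, and these numbers lie in $[0,1]$ since $f(\Lambda_n,\omega)(x)$ counts at most $\setsize{\Lambda_n}$ eigenvalues; dominated convergence therefore gives $f^*(x)=\lim_{n\to\infty}\setsize{\Lambda_n}^{-1}\E\,\Tr\bigl(\ifu{\Ioc{-\infty}x}(H_\omega^{\Lambda_n})\bigr)$, which is one of the standard definitions of the integrated density of states of the ergodic operator~$H$. The Pastur--Shubin trace formula (cf.~\cite{PasturF-92}) identifies this quantity with $\E\spr{\dirac0}{\ifu{\Ioc{-\infty}x}(H_\omega)\dirac0}$ at every continuity point of the latter; the particular finite-volume restriction $H_\omega^{\Lambda_n}=p_{\Lambda_n}H_\omega i_{\Lambda_n}$ used here does not affect the limit, because replacing it by any other boundary condition changes $\setsize{\Lambda_n}^{-1}\Tr\bigl(\ifu{\Ioc{-\infty}x}(H_\omega^{\Lambda_n})\bigr)$ only by $O(\setsize{\partial^1\Lambda_n}/\setsize{\Lambda_n})=o(1)$, by the rank estimate~\eqref{eq:rankest}. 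Since $f^*$ and $x\mapsto\E\spr{\dirac0}{\ifu{\Ioc{-\infty}x}(H_\omega)\dirac0}$ are both right-continuous and coincide on the co-countably many, hence dense, continuity points of the latter, they coincide on all of~$\R$. (Alternatively, the identification can be reproved from scratch via strong resolvent convergence $H_\omega^{\Lambda_n}\to H_\omega$ as $\Lambda_n\uparrow\Z^d$, together with the translation invariance~\labelcref{M1}.)

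For the quantitative bound~\eqref{eq:errorestimate} I would specialize \cref{cor:main} to the constants $\K=9$, $D=D'=8$, $r'=1$ furnished by \cref{la:ecf}. Since for this particular $f$ the boundary term is the \emph{exact} quantity $b(\Lambda)=8\setsize{\partial^1\Lambda}$ and all sets entering the tiling construction are cubes, one retraces the geometric part of the proof of \cref{cor:main}, now using the sharp identity $\setsize{\partial^r\Lambda_n}=(n+2r)^d-(n-2r)^d$ in place of the crude estimate $\setsize{\partial^r\Lambda_n}\le2^{2d+1}r^dn^{d-1}$ employed there; this produces the stated constants and the denominators $n-m$, $m-r$, while the probabilistic term is carried over unchanged.

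\emph{Main obstacle.} The applications of \cref{thm:main,cor:main} and the sharpened geometric estimate are bookkeeping on top of \cref{la:ecf}. The only genuinely non-routine step is the identification of $f^*$ with~\eqref{eq:pasturshubin}: making the Pastur--Shubin formula (equivalently, the strong-resolvent-convergence argument together with the passage to continuity points and the control of the boundary layer) rigorous, while keeping track of the fact that the multiplication operator $V_\omega$ need not be bounded when $\cA\subseteq\R$ is unbounded.
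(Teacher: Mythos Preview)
Your proposal is correct and follows the paper's approach: \cref{la:ecf} gives admissibility, \cref{thm:main} yields uniform convergence to some limit in~$\B$, and the limit is identified with~\eqref{eq:pasturshubin} via \cite{PasturF-92}. The only minor variation is in the identification step: the paper quotes \cite[Theorem~4.8]{PasturF-92} for the \emph{almost sure} pointwise convergence of $\setsize{\Lambda_n}^{-1}f(\Lambda_n,\omega)(x)$ to~\eqref{eq:pasturshubin} at continuity points of the latter and then observes that a sequence converging both weakly and uniformly must have the same limit, whereas you first pass to expectations by dominated convergence and then match two right-continuous functions on a dense set---both routes are valid and essentially equivalent. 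The paper does not spell out the derivation of the sharpened constants and denominators in~\eqref{eq:errorestimate}; your plan to redo the geometric part of the proof of \cref{cor:main} with the exact identity for $\setsize{\partial^r\Lambda_n}$ (rather than the crude bound $2^{2d+1}r^dn^{d-1}$) and the specific values $\K=9$, $D=D'=8$, $r'=1$ is exactly what is required, and the numerators $26m^d+8$ and $34r^d+24$ already match $(2\K+D)m^d+D'{r'}^d$ and $2(\K+D)r^d+3D'{r'}^d$ from \cref{cor:main}.
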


\begin{proof}
 By \cref{la:ecf} we know that the eigenvalue counting function
$f\from\cF\times\Omega\to\B$ is admissible.
Thus we can apply \cref{thm:main} and obtain  that there exists
a function $\bar f\in \B$ and  a set $ \Omega_1\in\Borel(\Omega)$ of full measure
such that for each $\omega\in \Omega_1$ we have
\begin{align}
 \lim_{n\to\infty}\biggnorm{\frac{f(\Lambda_n,\omega)}{\setsize{\Lambda_n}}- \bar f} =0\text.
\end{align}
Thus, it remains to show that $\bar f$ equals the function
\begin{equation*}
  f^*\from\R\to\Icc01\textq,
  f^*(x):=\E(\spr{\delta_{0}}{\chi^\omega_{\Ioc{-\infty}x}\delta_{0}})\text.
\end{equation*}
Here we use ergodicity of~$H$
and infer from \cite[Theorem~4.8]{PasturF-92}
that the there is a set $\Omega_2\in\Borel(\Omega)$
of full measure such that for each $\omega\in\Omega_2$
\begin{align}
  \lim_{n\to\infty}\frac{f(\Lambda_n,\omega)(x)}{\setsize{\Lambda_n}}
    =f^*(x)
\end{align}
for all $x\in\R$ which are continuity points of~$f^*$.
By definition, this is weak convergence of distribution functions.
Thus, as for all $\omega\in\Omega_1\isect\Omega_2$
we have that $f(\Lambda_n,\omega)/\setsize{\Lambda_n}$
converges weakly to $f^*$ and uniformly to $\bar f$, which implies $\bar f=f^*$.
\end{proof}

\begin{Remark}
\begin{asparaitem}
\item The limit~$f^*$ of the normalized eigenvalue counting functions is called
the \emph{integrated density of states} or \emph{spectral distribution function}
of the operator~$H$.
The fact that $f^*$ can be expressed as the function given in
\eqref{eq:pasturshubin} is often referred to as the
\emph{Pastur--Shubin trace formula}, named after the pioneering works
\cite{Pastur-71} and \cite{Shubin-79}.
For more recent results in the specific context we are treating here,
c.\,f.~\cite{Veselic-08,diss:fabian,LenzVeselic2009} and the references therein.
%
%
%

\item Let us also emphasize that the $f^*$ is a deterministic function.
On the one hand this is interesting as this implies that
the normalized eigenvalue counting function converges
for almost all realizations to same limit function.
On the other hand this is not surprising as we mentioned that $H$ is ergodic,
and in this setting it is well-known that the spectrum (as a set)
is deterministic, see for instance \cite{PasturF-92}.
\item The result is easily generalized to sequences of cubes $(\Lambda_n)_n$
of diverging side length with $\Lambda_n\subsetneq\Lambda_{n+1}$.
The validity of the Pastur--Shubin formula shows that the limit~$f^*$
is independent of the specific choice sequence of cubes $(\Lambda_n)_n$.
\item
The statement of \cref{perc:main} has been obtained before in a different setting.
In \cite{LenzMV-08,LenzVeselic2009} ergodic random operators have been considered.
The assumption of ergodicity concerns the measure $\PP$ (in our notation) and is weaker than the
assumptions (M1) to (M3) which we use here. With this regard the result of  \cite{LenzVeselic2009}
is more general than the one obtained here. However, under the mere assumption of ergodicity it is not possible to obtain
explicit error estimates as in~\eqref{eq:errorestimate}.
The paper \cite{LenzMV-08} obtains an error estimate,
similar to, but weaker then~\eqref{eq:errorestimate}.
There the setting is also different from ours here: $\cA$ needs to be countable and
instead of a probability measure properties of frequencies are used.
\item
Similar, but weaker results have been proven for  Anderson-percolation Hamiltonians
in  \cite{Veselic-05b,Veselic-06,LenzVeselic2009}.
These models are particularly interesting
since their integrated density of states exhibits typically an infinite  set of discontinuities,
which lie dense in the spectrum. The random variables entering the Hamiltonian may take uncountably many different values.
\end{asparaitem}
\end{Remark}

\section{Cluster counting functions in percolation theory}\label{sec:ccf}

We introduce briefly percolation on~$\Z^d$.
Percolation comes in two flavors, site and bond percolation.
We focus on site percolation here.
Part of the results have already been obtained in \cite{PogorzelskiS-13}. However, we go far beyond since we not only obtain convergence of densities, but are even able to identify the limit objects.

As before, we let $\Omega:=\R^{\Z^d}$.
We fix the alphabet $\cA:=\{0,1\}$
and a probability measure $\P\from\Borel(\Omega)\to\Icc01$
which is supported in~$\cA^{\Z^d}\in\Borel(\Omega)$,
i.\,e.\ $\P(\cA^{\Z^d})=1$.
A configuration $\omega\in\cA^{\Z^d}\subseteq\Omega$
determines a \emph{percolation graph}
$\Gamma_\omega=(\Z^d,\cE_\omega)$ as follows.
The set of vertices of~$\Gamma_\omega$ is~$\Z^d$,
and an edge connects two vertices if and only if they have distance~$1$
and are both ``switched on''
in the configuration~$\omega=(\omega_z)_{z\in\Z^d}$:
\begin{equation*}
  \cE_\omega:=\bigl\{\{x,y\}\subseteq\Z^d\bigm|
    \fd(x,y)=1,\omega_x=\omega_y=1\bigr\}\text.
\end{equation*}
By this, the percolation graph~$\Gamma_\omega$
is well-defined for $\P$-almost all $\omega\in\Omega$,
and~$\Gamma_\omega$ is a random graph.
For our purposes, we want~$\P$ to satisfy \labelcref{M1,M2,,M3}.
This setting includes but is not limited to the product measure
$\P=\prod_{z\in\Z^d}\mu$, where $\mu\from\Borel(\R)\to\Icc01$
is any probability measure supported on~$\cA$.

We need some standard terminology of graph theory.
Let $\Gamma=(V,\cE)$ be a graph.
For each subset $\Lambda\subseteq V$ of the set of nodes,
$\Gamma$ induces a graph $\Gamma^\Lambda:=(\Lambda,\cE^\Lambda)$ by
\begin{equation*}
  \cE^\Lambda
    :=\{e\in\cE\mid e\subseteq\Lambda\}\text.
\end{equation*}
A \emph{walk} of length~$n\in\N\union\{0,\infty\}$
in the graph~$\Gamma$ is a sequence of nodes
$(z_j)_{j=0}^n\in(\Z^d)^{n+1}$ such that
$\{z_j,z_{j+1}\}$ is an edge of~$\Gamma$, i.\,e.\ %
$\{z_j,z_{j+1}\}\in\cE$, for all $j\in\N\union\{0\}$, $j<n$.
Note that a finite walk of length~$n$ contains~$n$ edges but $n+1$ nodes.
\par
If the walk $(z_j)_{j=0}^n$ has finite length~$n<\infty$,
we say that it \emph{connects} the points~$z_0$ and~$z_n$.
Being connected by a walk is an equivalence relation on the nodes.
We denote the fact that two points $x,y\in\Z^d$
are connected in the graph~$\Gamma$ as~$x\connectedto[\Gamma]y$.

The equivalence classes of~$\connectedto[]$ are called \emph{clusters}.
Let $\Lambda\subseteq\Z^d$ and $x\in\Lambda$.
The cluster of~$x$ in the percolation graph~$\Gamma_\omega^\Lambda$
restricted to~$\Lambda$ consists of all nodes which are connected to~$x$
by a walk in~$\Gamma_\omega^\Lambda$:
\begin{equation*}
  \cluster_x^\Lambda(\omega)
    :=\{y\in\Z^d\mid x\connectedto[\Gamma_\omega^\Lambda]y\}\text,
\end{equation*}
again for $\omega\in\cA^{\Z^d}$, $\Lambda\subseteq\Z^d$ and 
$x,y\in\Lambda$.

\subsection{Convergence of cluster counting functions}\label{sec:per1}
We now define a cumulative counting function for clusters.
As before, let~$\cF$ be the set of finite subsets of~$\Z^d$ and~$\B$
the set of bounded functions from~$\R$ to~$\R$
which are continuous from the right.
The function~$f\from\cF\times\Omega\to\B$
counts the number of clusters in~$\Gamma_\omega^\Lambda$
which are smaller then the given threshold:
\begin{equation}\label{eq:cccf}
  f(\Lambda,\omega)(\lambda)
    :=\bigsetsize{\bigl\{\cluster_z^\Lambda(\omega)\bigm|
      z\in\Lambda,\setsize{\cluster_z^\Lambda(\omega)}\le\lambda\bigr\}}\text.
\end{equation}
Note that~$f$ counts clusters and not vertices in clusters.

\begin{Lemma}\label{la:ccf}
  The cluster counting function $f\from\cF\times\Omega\to\B$
  given by~\eqref{eq:cccf}
  is admissible in the sense of \cref{def:admissible}
  and permits a proper boundary term.
  Possible constants are $D=D'=2$, $r'=1$ and $\K=3$.
\end{Lemma}
\begin{proof}
  We verify \labelcref{transitive,local,extensive,monotone,bounded}
  of \cref{def:admissible}.
 \begin{itemize}
 \item[\labelcref{transitive}]
    Let $\Lambda\in\cF$ and $x,z\in\Z^d$ be given.
    The percolation graph~$\Gamma_\omega$
    is determined by the configuration~$\omega\in\Omega$,
    for almost all $\omega\in\Omega$.
    The shifted configuration gives shifted clusters, i.\,e.
    \begin{equation*}
      \setsize{\cluster_x^\Lambda(\tau_z\omega)}
        =\setsize{\cluster_{x+z}^{\Lambda+z}(\omega)}
    \end{equation*}
    for all $x,z\in\Z^d$.
    Accordingly, for all $\lambda\in\R$,
    \begin{align*}
      f(\Lambda,\tau_z\omega)(\lambda)
     &=\setsize{\bigl\{\cluster_x^\Lambda(\tau_z\omega)\bigm|
        x\in\Lambda,\setsize{\cluster_x^\Lambda(\tau_z\omega)}\le\lambda\bigr\}}\\
     &=\setsize{\bigl\{\cluster_{x+z}^{\Lambda+z}(\omega)\bigm|
        x\in\Lambda,\setsize{\cluster_{x+z}^{\Lambda+z}(\omega)}\le\lambda\bigr\}}
      =f(\Lambda+z,\omega)(\lambda)\text.
    \end{align*}
\item[\labelcref{local}]
  Fix $\Lambda\in\cF$ and $\omega,\omega'\in\Omega$
  with $\omega_\Lambda=\omega'_\Lambda$,
  where $\omega_\Lambda:=(\omega_x)_{x\in\Lambda}$ as before.
  The edges of $\Gamma_\omega^\Lambda$
  are determined by $\omega_\Lambda$.
  Hence, $\Gamma_\omega^\Lambda=\Gamma_{\omega'}^\Lambda$,
  thus $\cluster_z^\Lambda(\omega)=\cluster_z^\Lambda(\omega')$
  for all $z\in\Lambda$, and we obtain
  $f(\Lambda,\omega)=f(\Lambda,\omega')$.
\item[\labelcref{extensive}]
  In order to show almost additivity, fix $\omega\in\Omega$, $n\in\N$
  and disjoint sets $\Lambda_j\in\cF$, $j\in\{1,\dotsc,n\}$.
  We name the union $\Lambda:=\Union_{j=1}^n\Lambda_j$.
  For $x\in\R$,
  \begin{equation*}
    \sum_{j=1}^nf(\Lambda_j,\omega)(x)
  \end{equation*}
  is the total number of clusters of size not larger than~$x$
  in the graphs~$\Gamma_\omega^{\Lambda_j}$.
  Whenever $\fd(\Lambda_j,\Lambda_\di)=1$, the graph~$\Gamma_\omega^\Lambda$
  could contain edges connecting a point in~$\Lambda_j$
  with a point in~$\Lambda_\di$, depending on~$\omega$.
  Each of these edges join two possibly different clusters,
  so for each edge, there are two less small clusters and one more large one.
  By this mechanism, the number of clusters below the threshold~$x$
  changes at most by twice the number of added edges.
  We note
  \begin{equation*}
    \Bigsetsize{\cE_\omega^\Lambda\setminus\Union_{j=1}^n\cE_\omega^{\Lambda_j}}
      \le\sum_{j=1}^n\setsize{\partial^1\Lambda_j}
  \end{equation*}
  and conclude
  \begin{equation*}
    \abs{f(\Lambda,\omega)(x)-\sum_{j=1}^nf(\Lambda_j,\omega)(x)}
      \le2\sum_{j=1}^n\setsize{\partial^1\Lambda_j}
  \end{equation*}
  for all $x\in\R$.
  The choice $b(\Lambda):=2\setsize{\partial^1\Lambda}$ for $\Lambda\in\cF$
  gives a proper boundary term for~$f$, cf.\ \cref{la:ecf}.
\item[\labelcref{monotone}]
  Let $\Lambda\in\cF$ and $\omega,\omega'\in\Omega$, $\omega\le\omega'$.
  Then each edge of~$\Gamma_\omega$ is also an edge in~$\Gamma_{\omega'}$:
  $\cE_\omega\subseteq\cE_{\omega'}$.
  As reasoned in~\labelcref{extensive},
  a new edge never increases the number of clusters below a threshold
  $x\in\R$, so
  \begin{equation*}
    f(\Lambda,\omega)(x)\ge f(\Lambda,\omega')(x)\text.
  \end{equation*}
\item[\labelcref{bounded}]
  For all $\omega\in\Omega$, $f(\{0\},\omega)(x)=0$ for $x<1$ and
  $f(\{0\},\omega)(x)=1$ for $x\ge1$.
  \qedhere
\end{itemize}
\end{proof}

\Cref{thm:main,la:ccf} immediately give the following.
\begin{Corollary}\label{cor:mainPerc}
  Let $\Lambda_n:=\Ico0n^d\isect\Z^d$ for $n\in\N$
  and $f\from\cF\times\Omega\to\B$
  be the cumulative cluster counting function given in~\eqref{eq:cccf}.
  There exists a set $\tilde\Omega\subseteq\Omega$
  of full measure and a function $f^*\in\B$
  such that, for each $\omega\in\tilde\Omega$,
  \begin{equation*}
    \lim_{n\to\infty}\Bignorm{\frac{f(\Lambda_n,\omega)}
                                    {\setsize{\Lambda_n}}-f^*}
      =0\text.
  \end{equation*}
  For all $m,n\in\N$, $m<n$, we have
  \begin{equation*}
    \Bignorm{\frac{f(\Lambda_n,\omega)}{\setsize{\Lambda_n}}-f^*}
    \le2^{d+1}\Bigl(\frac{8m^d+2}{n-m}
      +\frac{10r^d+6}{m-r}\Bigr)
      +\sup_{f\in\cU_{\K,D,D',r'}}\frac{\bignorm{\spr{f_{\Lambda_m^r}}
      {L_{m,n}^{r,\omega}-\P_{\Lambda_m^r}}}}{\setsize{\Lambda_m}}\text.
  \end{equation*}
\end{Corollary}

\subsection{Identification of the limit}

In the previous section we studied the convergence of the counting function in~\eqref{eq:cccf} normalized with $\setsize{\Lambda_n}$.
Next, we give a brief overview on closely related convergence results.
We sketch the proofs only briefly since these results are not in the main focus of this paper.
The heart of the section is that we do not just give statements about convergence, but even present closed expressions of the limits.

We start with defining
\begin{equation}
K_\omega(\Lambda) := \setsize{\{\cluster_x^\Lambda(\omega)\mid x\in \Lambda\}},
\end{equation}
which counts the number of all clusters in $\Gamma_\omega^\Lambda$.
Using this quantity we set:
\begin{align*}
 a_n^{(m)}(\omega)&:=\setsize{\Lambda_n}^{-1} \setsize{\{\cluster_x^{\Lambda_n}(\omega)\mid x \in \Lambda_n, \setsize{\cluster_x^{\Lambda_n}(\omega)}=m\}},\\
 b_n^{(m)}(\omega)&:={K_\omega(\Lambda_n)}^{-1} \setsize{\{\cluster_x^{\Lambda_n}(\omega)\mid x \in \Lambda_n, \setsize{\cluster_x^{\Lambda_n}(\omega)}=m\}},\text{ and}\\
 c_n^{(m)}(\omega)&:=\setsize{\Lambda_n}^{-1} \setsize{\{x \in \Lambda_n \mid  \setsize{\cluster_x^{\Lambda_n}(\omega)}=m\}},
\end{align*}
where again $\Lambda_n:=\Ico0n^d\isect\Z^d$ for $n\in\N$.

\begin{Lemma}\label{la:83}
In the above setting, we have almost surely
 \begin{align*}
  \lim_{n\to\infty} a_n^{(m)}(\omega) &= \frac1m \P(\setsize{\cluster_0}=m),\\
  \lim_{n\to\infty} b_n^{(m)}(\omega) &= \frac{1}{\kappa m} \P(\setsize{\cluster_0}=m),\text{ and}\\
  \lim_{n\to\infty} c_n^{(m)}(\omega) &=  \P(\setsize{\cluster_0}=m),
 \end{align*}
where $\kappa:=\E(\setsize{\cluster_0}^{-1})$.
\end{Lemma}

Note that the existence of the limit in the case corresponding to~$a_n^{(m)}$
was treated in \cref{sec:per1}.
The existence of the limits in \cref{la:83}
has already been proved in \cite{PogorzelskiS-13}
in the setting of bond percolation.
However, the authors did not give explicit expressions for the limit objects.
For the proof of \cref{la:83} one may use \cref{thm:main}
in combination with the $d$-dimensional version of Birkhoff's ergodic theorem,
see \cite{Keller-98}, and the fact \cite{Grimmett-99}
that for almost all $\omega$:
\begin{equation*}
  \lim_{n\to\infty}\frac{K_\omega(\Lambda_n)}{\setsize{\Lambda_n}}=\kappa\text.
\end{equation*}
The above convergence results can again be extend
to the associated distribution functions.
To formulate the corresponding result, we introduce for $n\in\N$ and $\omega\in\Omega$ the maps
$\Theta_\omega^n, \Phi_\omega^n, \Psi_\omega^n:\R\to\R$ by setting for each $m\in\N$
\begin{align*}
 \Theta_\omega^n(m)&:= \sum_{j=1}^{\lfloor m\rfloor } a_n^{(j)}(\omega)
 =\frac{\setsize{\{\cluster_x^{\Lambda_n}(\omega)\mid x\in\Lambda_n, |\cluster_x^{\Lambda_n}(\omega)|\le m\}}}{\abs{\Lambda_n}}\\
\Phi_\omega^n(m)&:= \sum_{j=1}^{\lfloor m\rfloor } b_n^{(j)}(\omega)
 =\frac{\setsize{\{\cluster_x^{\Lambda_n}(\omega)\mid x\in\Lambda_n, |\cluster_x^{\Lambda_n}(\omega)|\le m\}}}{K_\omega(\Lambda_n)}, \text{ and}\\
\Psi_\omega^n(m)&:=\sum_{j=1}^{\lfloor m\rfloor } c_n^{(j)}(\omega)=\frac{\setsize{\{x\in \Lambda_n \mid |\cluster_x(\omega)|\le m\}}}{\abs{\Lambda_n}}.
\end{align*}
Moreover, we define the deterministic functions $\Theta,\Phi, \Psi: \R\to\R$ by
\begin{equation}\label{def:theta}
 \Theta(m):=\sum_{j=1}^{\lfloor m \rfloor} \frac{1}{j}\P(\abs{\cluster_0}=j),\quad
 \Phi(m):=\frac{1}{\kappa}\Theta(m),\quad \text{and}\quad
 \Psi(m):=\P(\setsize{\cluster_0}\le m)
\end{equation}
for $m\in\N$.
\begin{Theorem}\label{thm:perc}
 In the above setting, we can find a set $\tilde\Omega\subseteq\Omega$ of full measure such that for all $\omega\in\tilde \Omega$ we have
\[
 \lim_{n\to\infty}\norm{  \Theta_\omega^n - \Theta  } =0\textq,
\lim_{n\to\infty}\norm{  \Phi_\omega^n - \Phi  }=0
\quad\text{and}\quad
 \lim_{n\to\infty}\norm{  \Psi_\omega^n - \Psi}=0\text.
\]
Here $\norm{\argmt}$ denotes the supremum norm in $\Borel(\R)$.
\end{Theorem}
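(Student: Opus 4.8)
The plan is to settle the three uniform limits separately, in each case reducing to a result already available. For $\Theta$, observe that $\Theta_\omega^n$ is exactly $f(\Lambda_n,\omega)/\setsize{\Lambda_n}$ for the cumulative cluster counting function $f$ of~\eqref{eq:cccf}, which is admissible by \cref{la:ccf}; hence \cref{cor:mainPerc} already hands us a full-measure set and an $f^*\in\B$ with $\norm{\Theta_\omega^n-f^*}\to0$ there, and it only remains to identify $f^*=\Theta$, which I would do pointwise: for a fixed threshold $m$ the sum $\Theta_\omega^n(m)=\sum_{j=1}^{\lfloor m\rfloor}a_n^{(j)}(\omega)$ is finite, so \cref{la:83} gives $\Theta_\omega^n(m)\to\sum_{j=1}^{\lfloor m\rfloor}\tfrac1j\P(\setsize{\cluster_0}=j)=\Theta(m)$ almost surely, and after intersecting with the previous set the uniform limit must coincide with the pointwise one. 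The same identification handles $\Phi$: since $\Phi_\omega^n=\bigl(K_\omega(\Lambda_n)/\setsize{\Lambda_n}\bigr)^{-1}\Theta_\omega^n$ and the fact recalled from \cite{Grimmett-99} gives $K_\omega(\Lambda_n)/\setsize{\Lambda_n}\to\kappa>0$ almost surely (non-degenerate case), while $\norm{\Theta_\omega^n}\le\K_f$ for all $n$ by admissibility, I would split $\Phi_\omega^n-\Phi=\bigl((K_\omega(\Lambda_n)/\setsize{\Lambda_n})^{-1}-\kappa^{-1}\bigr)\Theta_\omega^n+\kappa^{-1}(\Theta_\omega^n-\Theta)$ and conclude $\norm{\Phi_\omega^n-\Phi}\to0$ on the intersection of the relevant full-measure sets.

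The case of $\Psi$ is different, because the ``number of vertices lying in small clusters'' functional is \emph{not} almost additive with a boundary term — a single cluster crossing a cutting surface can be macroscopically large — so \cref{thm:main} does not apply directly. Instead I would write $\Psi_\omega^n(m)=\setsize{\Lambda_n}^{-1}\sum_{x\in\Lambda_n}\one_{\{\setsize{\cluster_x(\omega)}\le m\}}$ and invoke the $d$-dimensional Birkhoff ergodic theorem (as in \cite{Keller-98}, exactly as in the argument for \cref{la:83}), applied to the bounded observable $\one_{\{\setsize{\cluster_0}\le m\}}$ for each $m\in\N$ and also to $\one_{\{\setsize{\cluster_0}<\infty\}}$; intersecting these countably many full-measure sets yields, on a single set, the pointwise convergence $\Psi_\omega^n(m)\to\Psi(m)$ for every integer $m$ together with $\Psi_\omega^n(\infty)\to\P(\setsize{\cluster_0}<\infty)$. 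Since $\Psi_\omega^n$ and $\Psi$ are nondecreasing and constant between consecutive integers, I would then upgrade pointwise to uniform convergence by a Pólya-type argument: given $\varepsilon>0$, choose $M\in\N$ with $\P(\setsize{\cluster_0}<\infty)-\Psi(M)<\varepsilon/2$, control the finitely many thresholds $m\le M$ by the pointwise estimates, and bound $\sup_{m>M}\setsize{\Psi_\omega^n(m)-\Psi(m)}$ by this tail plus an $o_n(1)$ term, using monotonicity and the convergence at $m=\infty$.

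I expect the $\Psi$-part to be the main obstacle. For $\Theta$ and $\Phi$ the uniformity is inherited from \cref{cor:main}, whereas for $\Psi$ it has to be produced directly, and the delicate point is that the empirical cluster-size distributions must not leak mass to ``infinite'' clusters in an uncontrolled fashion; this is precisely what forces the bookkeeping at level $m=\infty$ into the Pólya step and is where care is needed. Everything else is a routine combination of \cref{cor:mainPerc}, \cref{la:83}, and the multidimensional ergodic theorem.
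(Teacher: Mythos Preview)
Your approach is correct and aligns with the paper's (sketched) proof. For $\Theta$ and $\Phi$ you match it exactly: uniform convergence from \cref{cor:mainPerc}/\cref{thm:main}, pointwise identification of the limit via \cref{la:83}, and for $\Phi$ the factorisation through $K_\omega(\Lambda_n)/\setsize{\Lambda_n}\to\kappa$.

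For $\Psi$ the paper likewise abandons the admissibility route (for the reason you identify) and invokes the scheme of \cite[Section~6]{LenzVeselic2009}: first establish weak convergence $\Psi_\omega^n\Rightarrow\Psi$, then verify convergence of all point masses $\nu_\omega^n(\{\lambda\})\to\nu(\{\lambda\})$, and use the general fact that these two together force uniform convergence of the distribution functions. Your direct P\'olya-type argument---pointwise convergence at each integer together with control at the level $m=\infty$, exploiting that both sides are monotone step functions jumping only at integers---is equivalent in this purely discrete situation and arguably more transparent, since it sidesteps the detour through weak convergence; the \cite{LenzVeselic2009} scheme, by contrast, is stated for general distribution functions and would apply even without the integer-valued structure. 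Both routes rest on the same inputs, namely \cref{la:83} and Birkhoff's ergodic theorem.
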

Let us give a brief sketch of the proof.
The convergence of $\Theta_\omega^n$ and $\Phi_\omega^n$ follows rather direct from \cref{thm:main} and \cref{la:83}. However, in order to obtain the convergence of $\Psi_\omega^n$ one has to apply a different scheme,
which was used in the context of the eigenvalue counting function in
\cite[Section~6]{LenzVeselic2009}.
The strategy  consist of the following steps:
One first verifies weak convergence of the distribution functions and second, shows that $\nu_\omega^n(\{\lambda\})\to \nu(\{\lambda\})$ for each $\lambda\in\R$. Here $\nu$ and $\nu_\omega^n$ are the measures corresponding to $\Psi$ and $\Psi_\omega^n$, respectively.
Both steps together imply uniform convergence.  To verify these convergences one applies again \cref{la:83} as well as Birkhoff's ergodic theorem.

\begin{Remark}
  The first statement of \cref{thm:perc} identifies the limit~$f^*$
  from \cref{cor:mainPerc}, namely it shows $f^*=\Theta$,
  where~$\Theta$ is given in~\eqref{def:theta}.
\end{Remark}

\appendix

\section{Examples of measures}\label{example:density}

Let us discuss three classes of examples of measures~$\P$
satisfying \labelcref{M1,M2,,M3}.
\begin{enumerate}[(a)]
\item \emph{Countable colors:}
  Consider the case $d=1$ and let $\cA=\N_0$.
  Let $\Omega=\R^\Z$ and fix an arbitrary product measure
  $\tilde\P\from\Borel(\Omega)\to\Icc01$ with support
  \begin{equation*}
    \supp\tilde\P\subseteq\cA^\Z\text.
  \end{equation*}
  We define a transformation of~$\tilde\P$.
  To this end, let constants
  $c,\beta,\alpha_{-c},\alpha_{-c+1},\dotsc,\alpha_{c}\in\N_0$
  be given and consider the function
  \[
    \phi\from\Omega\to\Omega,\qquad (\phi(\omega))_z
     :=\beta+ \sum_{\di=-c}^c \alpha_\di \omega_{z-\di}\text.
  \]
  We define $\P:=\tilde \P\circ \phi^{-1}$.
  Let us check the conditions \labelcref{M1,M2,,M3} for~$\P$.
  In order to check~\labelcref{M1} let $z\in\Z$ be given.
  Then, using stationarity of the product measure~$\tilde\P$,
  \begin{align*}
   \P\circ\tau_z^{-1}
    &=\tilde\P\circ\phi^{-1}\circ\tau_z^{-1}
     =\tilde\P\circ(\tau_z\circ\phi)^{-1}\\
    &=\tilde\P\circ(\phi\circ\tau_z)^{-1}
     =\tilde\P\circ\tau_z^{-1}\circ\phi^{-1}
     =\tilde\P\circ\phi^{-1}
     =\P\text.
  \end{align*}

  Let us verify condition~\labelcref{M2} for~$\P$.
  We define for each $\Lambda\in\cF$ the function
  \[
    \rho_\Lambda\from\cA^\Lambda\to \R,\qquad x
      =(x_z)_{z\in \Lambda}\mapsto\rho_\Lambda(x)
      =\P(\Pi_\Lambda^{-1}(\{x\}))\text.
  \]
  Then $\rho_\Lambda$ is the density of the marginal measure~$\P_\Lambda$
  with respect to the counting measure on~$\N_0$,
  since we have for each $\Lambda\in \cF$ and $A\in \Borel(\cA^\Lambda)$
  \[
   \P_\Lambda(A)
    =\sum_{x\in A} \P(\Pi_\Lambda^{-1}(\{x\}))
    =\sum_{x\in A}\rho_\Lambda(x)\text.
  \]

  It remains to verify condition~\labelcref{M3}.
  To this end, let $\Lambda_1,\dotsc,\Lambda_n\subseteq \Z$
  with $\min\{\fd(\Lambda_i,\Lambda_j)\mid i\neq j\}>2c$ be given.
  Then, using the definition of $\phi$, we have for each
  $x=(x_z)_{z\in\Lambda}\in\Lambda:=\bigcup_{i=1}^n\Lambda_i$
  \begin{align*}
   \P(\Pi_\Lambda^{-1}(\{x\}))
    =(\tilde\P\circ\phi^{-1}\circ\Pi_\Lambda^{-1})(\{x\})
    =\prod_{i=1}^n(\tilde\P\circ\phi^{-1}\circ\Pi_{\Lambda_i}^{-1})(\{x\})\text,
  \end{align*}
  which proves that $\rho_\Lambda= \prod_{i=1}^n\rho_{\Lambda_i}$.

\item \emph{Normal distribution:}
Here, we treat the case $d=1$, $\cA=\R$, $\Omega=\R^\Z$ and set
$
\tilde \P:=\bigotimes_{z\in \Z} \cN(0,1)\from\Borel(\Omega)\to\Icc01\text,
$
where $\cN(0,1)$ is the standard normal distribution.
For $c\in\N_0$ and $\beta,\alpha_{-c},\alpha_{-c+1},\dotsc,\alpha_{c}\in\R$
we use
\[
 \phi\from\Omega\to\Omega,\qquad (\phi(\omega))_z
   =\beta+ \sum_{k=-c}^c \alpha_k \omega_{z-k}
\]
to define $\P:=\tilde \P\circ \phi^{-1}$.
As before, the conditions \labelcref{M1,M3}
are implied by the choice of $\phi$ and the product structure of~$\tilde\P$.
For~\labelcref{M2}, let $\Lambda\subseteq \Z$ be finite
and first assume that $\Lambda=\Icc ab\cap \Z$, $a,b\in\Z$.
We define the matrix
\[
 A_{\Lambda}\in \R^{\Lambda\times \{a-c,\dotsc, b+c\}}\text,
\qquad (A_\Lambda)_{i,j}=\alpha_{i-j}\text,
\]
where $\alpha_k:=0$ if $k\notin\{-c,\dotsc, c\}$.
Recall that
$\P_\Lambda
=\tilde \P\circ \phi^{-1} \circ \Pi_\Lambda^{-1}
=\tilde \P\circ  (\Pi_\Lambda \circ\phi)^{-1}$.
For $\omega\in\Omega$
we get
\[
 \Pi_\Lambda(\phi(\omega)) = A_\Lambda \Pi_{\Icc{a-c}{b+c}}(\omega) + \beta e_\Lambda,
\]
where $e_\Lambda=(1,\dotsc,1)^\top \in \R^{\Lambda}$.
Now, it follows that $\P_\Lambda$ is normal distributed
with mean $\beta e_\Lambda$ and covariance matrix $A_\Lambda A_\Lambda^\top$.
Note that $A_\Lambda A_\Lambda^\top$ is invertible
since the rows of $A_\Lambda$ are linearly independent.
Thus, the measure $\P_\Lambda$ is absolutely continuous
with respect to the multi-dimensional Lebesgue measure.

In the situation where $\Lambda$ is not of the form $\Icc ab\isect \Z$,
consider the interval $I:=\Icc{\min \Lambda}{\max\Lambda}\isect\Z$.
The measure $\P_\Lambda$ is a marginal measure of  $\P_I$ and therefore has a density.
\item \emph{Abstract densities and finite range:}
In the following we develop a more general example with densities.
Again, we consider for simplicity reasons the case $d=1$,
however this is easily generalized to higher dimensions.
Choose $\cA,\cB\in\Borel(\R)$ and independent
$\cB$-valued random variables~$X'_x$, $x\in\Z$ with density $g\from\cA\to\R_+$.
We use the abbreviation $X_{\Icc m\ell}:=(X_m,\dotsc,X_\ell)$.
We utilize a function $\phi\from\cB^{k+1}\to\cA$
to introduce the $\cA$-valued random variables
\begin{equation*}
  X_x:=\phi(X'_{\Icc x{x+k}})\qtext{$x\in\Z$.}
\end{equation*}
We require from~$\phi$, that there is a function
$\psi\from\cA\times\cB^k\to\cB$ such that
\begin{equation*}
  \psi(\phi(x_{\Icc0k}),x_{\Icc1k})=x_0
\end{equation*}
for all $x_{\Icc0k}\in\cB^{k+1}$.
Further,~$\psi$
shall be continuously differentiable w.\,r.\,t.\ its first argument:
$\psi':=D_1\psi$.
An example of such a pair of functions is
\begin{equation*}
  \phi(x_{\Icc0k}):=\frac1{k+1}\sum_{j=0}^kx_j\textq,
  \psi(\xi_0,x_{\Icc1k}):=(k+1)\xi_0-\sum_{j=1}^kx_j\text,
\end{equation*}
where $\cA:=\cB:=\Icc01$ and $\psi'(\xi_0,x_{\Icc1k})=k+1$.
In this example, $(X_x)_x$ is a moving average process.
By suitable modifications,
all moving average processes are seen to be included in our setting.

\begin{Proposition}
  Fix a finite set $\Lambda\subseteq\Z$.
  Under the specified circumstances,
  the joint distribution of $(X_x)_{x\in\Lambda}$,
  is absolutely continuous with respect to Lebesgue measure on~$\cA^\Lambda$.
\end{Proposition}
\begin{proof}
  Without loss of generality, we treat only the case
  $\Lambda=\{1,\dotsc,\ell\}$.
  By construction, for $A_1,\dotsc,A_\ell\subseteq\cA$ measurable,
  \begin{align*}
    p\etdef\P(X_1\in A_1,\dotsc,X_\ell\in A_\ell)\\
     &=\int_{\cB^{\ell+k}}\dx_{\Icc1{\ell+k}}
        \prod_{m=1}^\ell\ifu{A_m}(\phi(x_{\Icc m{m+k}}))\cdot
        \prod_{m=1}^{\ell+k}g(x_m)\text.
  \end{align*}
  By Fubini and induction on $j\in\{0,\dotsc,\ell\}$, we see
  \begin{multline*}
    p=\int_{A_1\times\dotsb\times A_j}\d\xi_{\Icc1j}\int_{\cB^{\ell-j+k}}\dx_{\Icc{j+1}{\ell+k}}\\
        \prod_{m=1}^j\bigl(g(\psi(\tilde x_m^{(j)}))\abs{\psi'(\tilde x_m^{(j)})}\bigr)\cdot
        \prod_{m=j+1}^\ell\ifu{A_m}(\phi(x_{\Icc m{m+k}}))\cdot
        \prod_{m=j+1}^{\ell+k}g(x_m)\text,
  \end{multline*}
  where $\tilde x_j^{(j)}:=(\xi_j,x_{\Icc{j+1}{j+k}})$ and, for $1\le m<j$, the term
  $\tilde x_m^{(j)}:=\tilde x_m^{(j-1)}|_{x_j\mapsto\psi(\xi_j,x_{\Icc{j+1}{j+k}})}$
  is generated from $\tilde x_m^{(j-1)}$ by substituting~$x_j$
  by $\psi(\xi_j,x_{\Icc{j+1}{j+k}})$.
 For the induction step use the substitution
$\xi_j:=\phi(x_{\Icc j{j+k}})$ or $x_j=\psi(\xi_j,x_{\Icc{j+1}{j+k}})$ in
  \begin{align*}
    &\int_\cB \dx_j\ifu{A_j}(\phi(x_{\Icc j{j+k}}))f_j(x_j)g(x_j)\\
    &=\int_{A_j}\d\xi_j f_j(\psi(\xi_j,x_{\Icc{j+1}{j+k}}))g(\psi(\xi_j,x_{\Icc{j+1}{j+k}}))
      \abs{\psi'(\xi_j,x_{\Icc{j+1}{j+k}})}\text,
  \end{align*}
 for any $x_{\Icc{j+1}{j+k}}\in\cB^k$ and suitable $f_j\from\cB\to\R_+$.
 For $j=\ell$, we conclude
  \begin{align*}
    p&=\int_{A_1\times\dotsb\times A_\ell}\d\xi_{\Icc1\ell}\int_{\cB^k}\dx_{\Icc{\ell+1}{\ell+k}}
        \prod_{m=1}^\ell\bigl(g(\psi(\tilde x_m^{(\ell)}))\abs{\psi'(\tilde x_m^{(\ell)})}\bigr)
        \cdot\prod_{m=\ell+1}^{\ell+k}g(x_m)
        \text.
  \end{align*}
  We hereby identified the density with respect
to the product Lebesgue measure on~$\cA^\ell$.
\end{proof}
\end{enumerate}

\def\polhk#1{\setbox0=\hbox{#1}{\ooalign{\hidewidth
  \lower1.5ex\hbox{`}\hidewidth\crcr\unhbox0}}}

\end{document}


\providecommand{\bysame}{\leavevmode\hbox to3em{\hrulefill}\thinspace}
\providecommand{\MR}{\relax\ifhmode\unskip\space\fi MR }
\providecommand{\MRhref}[2]{%
  \href{http://www.ams.org/mathscinet-getitem?mr=#1}{#2}
}
\providecommand{\href}[2]{#2}

%

\end{document}